\documentclass[amstex,10pt, leqno]{amsart}
\usepackage{amsthm,amsmath,amsfonts,amssymb,enumerate,graphicx,hyperref,multicol,color,textcomp,hyperref,graphics}
\usepackage{hyperref}
\usepackage{doi}
\usepackage{float} 
\usepackage{cite}

\newtheorem{lem}{Lemma}
\newtheorem{thm}{Theorem}

\newtheorem{note}{Note}
\newtheorem{corollary}{Corollary}
\newtheorem{rem}{Remark}

{\qed\bigskip}

{\thanks
{
\begin{footnotesize}
\hspace*{-7mm}
 \\
 $^*$Corresponding author\\
 Keywords: Positive Linear  operators, rate of convergence.\\
2010 AMS Subject classification:41A36, 41A25
\end{footnotesize}
}}

\begin{document}

\leftline{ \scriptsize \it  }
\title []
{Simultaneous approximation properties of a certain
genuine hybrid type operator}
\maketitle
\begin{center} Nidhi Bisht$^1$ and  Asha Ram Gairola$^*$ \\
$^{1,2}$Department of Mathematics, Doon University, Dehradun-248001, India\\ bishtnidhi12@gmail.com, ashagairola@gmail.com
\end{center}

\begin{abstract}
We obtain estimates in simultaneous approximation for a summation-integral type genuine hybrid operator. The convergence of derivatives of operator to the corresponding derivatives of the functions is proved and estimates for rate of approximation are obtained.
\end{abstract}
\section{Introduction}
The  classical Sz$\acute{a}$sz-Mirakian Operators   devised and studied independently by Mirakian \cite{mirakyan1941approximation} and  Sz$\acute{a}$sz \cite{szasz1950generalization} are given as
\begin{equation*}\label{Sm}V_n(f,x)=\sum\limits_{k=0}^{\infty} f\left(\frac{k}{n}\right)\frac{e^{-nx}(nx)^k}{k!}.
\end{equation*}
Through the years extensive study has been done on approximation properties of these operators. Several modifications of the Sz$\acute{a}$sz-Mirakian Operators have also been introduced. One such modification was established by Prasad et al. \cite{prasad1983approximation} where he considered Baskakov basis function as the weight function and subsequently defined it as\\
\begin{equation*}\label{Smbopr}
G_n(f;x)=(n-1)\sum_{k=0}^{\infty}\frac{e^{-nx}(nx)^{k}}{k!}\int\limits_0^{\infty}f(t)\binom{n+k-1}{k}\frac{t^k}{(1+t)^{n+k}}\,dt.
\end{equation*}

P\u{a}lt\v{a}nea\cite{Paltanea} studied a two parameter integral operator with modified Sz\'{a}sz--Mirakyan basis in summation.
 Gupta et al.\cite{gupta2018approximation} further generalized operators $G_n(f;x)$ by taking a general form of two basis functions which are modified Sz\'{a}sz and Baskakov function and referred to them as genuine hybrid operator. For function $f:[0,\infty)\rightarrow \mathbb{R}$ and parameters $\rho>0,$ $c\in(0,1)$ the operators in \cite{gupta2018approximation}  are given by
\begin{equation}\label{Hybrid Operators}
B^{\rho}_{\alpha}(f;x,c)=\sum\limits_{k=1}^{\infty} p_{\alpha,k}(x,c)\int\limits_{0}^{\infty}\theta^{\rho}_{\alpha,k}(t)f(t)dt+p_{\alpha,0}(x,c)f(0),
\end{equation}
where $$p_{\alpha,k}(x,c)=\frac{(-x)^k}{k!}\phi^{(k)}_{\alpha,c}(x),\theta^{\rho}_{\alpha,k}(t)=\frac{\alpha\rho}{\Gamma(k\rho)}e^{-\alpha\rho t}(\alpha\rho t)^{k\rho-1}.$$
The authors of \cite{gupta2018approximation} established direct approximation theorems in terms of modulus of smoothness and proved point-wise convergence for derivatives of the operator $B^{\rho}_{\alpha}(f;x,c).$


The simultaneous approximation properties i.e. the study of convergence behaviour of derivatives of a linear positive operator for a function with given smoothness is an interesting part of approximation theory.
Recently, Tang et al.\cite{tang2020weighted} explored positive and inverse results for weighted simultaneous approximation of linear combinations of Baskakov operator using jacobi weight function.   It is worth mentioning Mohammed et al.\cite{Alkhaled} wherein the authors studied the basic convergence theorem in simultaneous approximation and obtained Voronovskaja-type asymptotic formula.
Lately, Abel\cite{abel2023simultaneous} studied simultaneous approximation by operators of exponential type and proved that under certain conditions, asymptotic expansions for sequences of operators belonging to slightly larger class of operators can be differentiated term by term.
Mohammed et.al.\cite{Mohammed_2021} illustrated the simultaneous approximation by a modification family of the classical sequence of Sz\'{a}sz operators with parameter $s > \frac{-1}{2}.$ For further literature in this direction we refer to the interesting papers \cite{abel2022simultaneous}, \cite{acar2015simultaneous}, \cite{cheng2012simultaneous}, \cite{article},\cite{gupta1995convergence}, \cite{sharma2020simultaneous},\cite{sinha2024simultaneous},\cite{tachev2014approximation}, \cite{verma2012convergence,xie2007pointwise,xie2005pointwise} and the references therein.

The aim of this paper is to study a special case of the operator (\ref{Hybrid Operators}) which is defined as
\begin{equation}\label{defoperator}
L_{n,c}f(x)=\sum_{k=0}^{\infty}p_{n,k}(x)\int_{0}^{\infty}\theta_{n,k}(t)f(t)\,dt
\end{equation}
$p_{n,k}(x)=\frac{(n/c)_k (c x)^k}{k! (c x+1)^{\frac{n}{c}+k}},$  $\theta_{n,k}(t)=\frac{n e^{-n x} (n x)^k}{k!}$ and
$(n/c)_k=\prod _{i=0}^{k-1} \left(\frac{n}{c}+i\right).$\\

The  direct results in simultaneous approximation are obtained in local setting.  For global results, we use the properties of Steklov means to obtain error estimates in terms of modulus of smoothness. The Steklov averaging means are defined as follows.\\

Let $f\in C[a,b]$ and $[a_1,b_1]\subset (a,b).$ Then,
for sufficiently small $h>0,$ the Steklov mean $f_{h,s}$
of $s$-th order corresponding to $f$ is defined by
\begin{equation*}f_{h,s}(t)=h^{-s}\int\limits_{-h/2}^{h/2}\!\!...\!\!\int\limits_{-h/2}^{h/2}
\left(f(t)+(-1)^{s-1}\Delta^s_{\sum_{i=1}^{s}t_i}f(t)\right)\prod_{i=1}^{s}dt_i
,t\in [a_1,b_1].\end{equation*}

\begin{lem}\label{lemma0.4} For the function
$f_{h,s}$, we have
\begin{enumerate}[(a)]
\item  $f_{h,s}$ has derivatives up to order $s$ over
$[a_1,b_1]$;
\item $\big\|f^{(r)}_{h,s}\big\|_{[a_1,b_1]}\leqslant
C h^{-r}\omega_s(f,h,[a,b]),r=1,2,...,s;$
\item $\|f-f_{h,s}\|_{[a_1,b_1]}\leqslant
C \omega_s(f,h,[a,b])$;
\item $\|f_{h,s}\|_{[a_1,b_1]}\leqslant C \|f\|_{[a,b]};$
\item $\big\|f^{(s)}_{h,s}\big\|_{[a_1,b_1]}\leqslant
C h^{-s} \|f\|_{[a,b]},$
\end{enumerate}
where $C$ is a certain constant not same at the each occurrence that depends on $s$
but is independent of $f$ and $h.$
\end{lem}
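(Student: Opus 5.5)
The plan is to expand the $s$-th difference inside the integral and rewrite $f_{h,s}$ as a combination of iterated averages of $f$. Since $\Delta^s_{u}f(t)=\sum_{j=0}^{s}(-1)^{s-j}\binom{s}{j}f(t+ju)$ with $u=\sum_{i=1}^s t_i$, the term $j=0$ cancels against $f(t)$ because $(-1)^{s-1}(-1)^s=-1$. We obtain
\begin{equation*}
f_{h,s}(t)=\sum_{j=1}^{s}(-1)^{j+1}\binom{s}{j}\,h^{-s}\int_{-h/2}^{h/2}\!\!\cdots\!\int_{-h/2}^{h/2} f\Bigl(t+j\sum_{i=1}^s t_i\Bigr)\prod_{i=1}^s dt_i .
\end{equation*}
Each summand is an $s$-fold iterated Steklov average of $f$ with step $jh$. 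Here $h$ is small enough, say $sh\le \min(a_1-a,\,b-b_1)$, so that every argument stays in $[a,b]$ for $t\in[a_1,b_1]$. Part (d) is then immediate: each average is bounded by $\|f\|_{[a,b]}$, which gives $C=\sum_j\binom{s}{j}=2^s-1$.

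For (a), (b) and (e), I use the standard fact that the average $A_\delta g(t)=\delta^{-1}\int_{-\delta/2}^{\delta/2}g(t+v)\,dv$ satisfies $(A_\delta g)'(t)=\delta^{-1}\Delta_\delta g(t-\delta/2)$. Substituting $v_i=jt_i$, the $j$-th summand equals $A_{jh}^s f$. Differentiating $r\le s$ times, each differentiation consumes one averaging operator and produces a first difference. This gives
\begin{equation*}
\bigl(A_{jh}^s f\bigr)^{(r)}(t)=(jh)^{-r}A_{jh}^{s-r}\Delta_{jh}^r f\bigl(t-\tfrac{rjh}{2}\bigr),
\end{equation*}
which proves existence and continuity of the derivatives up to order $s$, that is, (a). Bounding $|\Delta^r_{jh}f|\le 2^r\|f\|$ then yields (e) with $r=s$, and more generally $\|f^{(r)}_{h,s}\|\le Ch^{-r}\|f\|$.

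The main obstacle is (b) for $r<s$, because the formula above produces $\omega_r$, not $\omega_s$. My first attempt is to use the cancellation structure in the sum over $j$. The combination $\sum_j(-1)^{j+1}\binom{s}{j}$ is designed so that $f-f_{h,s}=(-1)^s h^{-s}\int\!\cdots\!\int\Delta^s_u f(t)\prod dt_i$. Parts (c) and (b) should both flow from this representation. Part (c) is immediate, since $|f-f_{h,s}|\le\sup_{|u|\le sh/2}|\Delta^s_uf|\le\omega_s(f,sh/2)\le C\omega_s(f,h)$ by the property $\omega_s(f,\lambda h)\le(1+\lambda)^s\omega_s(f,h)$.

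For (b), I plan to argue as follows. Set $g=f-f_{h,s}$, so that $f_{h,s}^{(r)}=(f_{h,s})^{(r)}$ with $f_{h,s}=f-g$, and then apply the Marchaud-type/Whitney argument. Alternatively, and this is what I would actually do, I would cite the classical result (Timan; Ditzian–Totik) or derive it as follows. Write $f^{(r)}_{h,s}(t)=\sum_j c_j (jh)^{-r}A^{s-r}_{jh}\Delta^r_{jh}f(\cdot)$. The difference $\Delta^{s}$ is recovered by exploiting that the coefficients $\binom{s}{j}(-1)^{j+1}j^{-r}$ annihilate polynomials of degree $<s-r$ after averaging. Concretely, I would show $f^{(r)}_{h,s}=h^{-s}\int\!\cdots\!\int \Delta^{s}_{(\cdot)}f\cdot K$ for a bounded kernel $K$, obtained by differentiating the $\Delta^s$-representation of $f-f_{h,s}$. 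Here $(f-f_{h,s})^{(r)}=-f_{h,s}^{(r)}$ when we interpret derivatives distributionally after convolution, and the differentiation falls on the smooth variable $t$ via the translated integration variables. If this bookkeeping proves too heavy, I would fall back on citing the known estimate as in Timan's treatment of Steklov functions, since the lemma is standard.
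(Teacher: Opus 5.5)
Most of your argument is correct. The reduction $f_{h,s}=\sum_{j=1}^{s}(-1)^{j+1}\binom{s}{j}A^s_{jh}f$ holds, and so does the formula $(A^s_{jh}f)^{(r)}(t)=(jh)^{-r}A^{s-r}_{jh}\Delta^r_{jh}f(t-rjh/2)$. Together they give (a), (d) and (e). Your $\Delta^s$-representation of $f-f_{h,s}$ gives (c). Part (b) for $r=s$ also follows, since $(jh)^{-s}|\Delta^s_{jh}f|\le (jh)^{-s}\omega_s(f,jh)\le h^{-s}\omega_s(f,h)$. The paper gives no argument at all and only cites Hewitt--Stromberg and Timan.

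One bookkeeping slip: the arguments $t+j\sum t_i$ range over $t\pm s^2h/2$. So you need $s^2h/2\le\min(a_1-a,b-b_1)$, and $sh\le\min(a_1-a,b-b_1)$ is not enough once $s\ge 3$.

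The genuine gap is your plan for (b) with $1\le r<s$. It cannot succeed, because that claim is false as written. Take $s=2$, $r=1$, $f(t)=t$. Then $\Delta^2_u f\equiv 0$, so $\omega_2(f,h,[a,b])=0$ and $f_{h,2}=f$, which gives $\|f'_{h,2}\|_{[a_1,b_1]}=1$. More generally, any polynomial of degree between $r$ and $s-1$ is a counterexample.

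Consequently no identity $f^{(r)}_{h,s}=h^{-s}\int\cdots\int \Delta^s f\cdot K$ with a bounded kernel $K$ can exist. Your step ``$(f-f_{h,s})^{(r)}=-f^{(r)}_{h,s}$'' is exactly where it breaks: it discards $f^{(r)}$, which is the part of $f^{(r)}_{h,s}$ that $\omega_s$ cannot control.

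Falling back on Timan will not rescue this either. The classical form of this lemma, as used in the simultaneous-approximation literature, has $\omega_r(f,h,[a,b])$ in (b). Your own derivative formula already proves that form:
\[
\|f^{(r)}_{h,s}\|_{[a_1,b_1]}\le\sum_{j=1}^{s}\binom{s}{j}(jh)^{-r}\omega_r(f,jh)\le(2^s-1)\,h^{-r}\omega_r(f,h),
\]
using $\omega_r(f,jh)\le j^r\omega_r(f,h)$. You should state and prove (b) in this form, with $\omega_s$ appearing only when $r=s$. That is also all the paper needs later: Theorem~\ref{th1Simultaneous} uses (b) only with $r=s=2$ applied to $f^{(r)}$, together with (c) and (d).
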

\begin{proof}Following \cite{hewitt2012real}  or \cite{timan2014theory},  the proof follows easily.
\end{proof}


\section{Preliminaries}

\begin{lem}\label{moments}
If $M_{n,i}(x)=L_{n,c}(t^i,x)$ and $\mu_{n,i}(x)=L_{n,c}((t-x)^i,x),$ then,
\begin{enumerate}
\item $M_{n,0}(x)=1$
\item $M_{n,1}(x)=\frac{n x+1}{n}$
\item $M_{n,2}(x)=\frac{c n x^2+n^2 x^2+4 n x+2}{n^2}$
\item $M_{n,3}(x)=\frac{n x \left(2 c^2 x^2+9 c x+18\right)+3 n^2 x^2 (c x+3)+n^3 x^3+6}{n^3}$
\item $M_{n,4}(x)=\frac{n^2 x^2 \left(11 c^2 x^2+48 c x+72\right)+2 n x \left(3 c^3 x^3+16 c^2 x^2+36 c x+48\right)+2 n^3 x^3 (3 c x+8)+n^4 x^4+24}{n^4}$
\end{enumerate}
\end{lem}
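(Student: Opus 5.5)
The plan is to reduce everything to factorial moments of the discrete weights $p_{n,k}(x)$. I read the kernel in (\ref{defoperator}) as $\theta_{n,k}(t)=\frac{n e^{-nt}(nt)^k}{k!}$; the $x$ in the displayed formula looks like a misprint for $t$, since otherwise the integral in $t$ would diverge. This reading is consistent with the gamma-type kernel $\theta^{\rho}_{\alpha,k}$ of (\ref{Hybrid Operators}).

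\textbf{Step 1 (inner integral).} With the substitution $u=nt$ and the Gamma integral,
\begin{equation*}
\int_0^\infty \theta_{n,k}(t)\,t^i\,dt=\frac{1}{n^i k!}\int_0^\infty e^{-u}u^{k+i}\,du=\frac{(k+1)(k+2)\cdots(k+i)}{n^i}.
\end{equation*}
Hence
\begin{equation*}
M_{n,i}(x)=n^{-i}\sum_{k\ge0}p_{n,k}(x)\,(k+1)_i ,
\end{equation*}
where $(k+1)_i=(k+1)\cdots(k+i)$ is the rising factorial.

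\textbf{Step 2 (factorial moments of $p_{n,k}$).} Put $a=n/c$ and $q=\frac{cx}{1+cx}$. Then
\begin{equation*}
p_{n,k}(x)=\frac{(a)_k}{k!}\,q^k(1-q)^a,
\end{equation*}
which is a negative binomial distribution. Its generating function is
\begin{equation*}
\sum_k p_{n,k}(x)s^k=(1-q)^a(1-qs)^{-a}.
\end{equation*}
Setting $s=1$ gives $M_{n,0}=1$. Differentiating $j$ times and then setting $s=1$ gives
\begin{equation*}
\sum_k p_{n,k}(x)\,k(k-1)\cdots(k-j+1)=(a)_j\Big(\frac{q}{1-q}\Big)^j=(n/c)_j\,(cx)^j .
\end{equation*}
For example, $j=1$ gives $nx$ and $j=2$ gives $n(n+c)x^2$.

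\textbf{Step 3 (expansion and collection).} Expand the rising factorial $(k+1)_i$ in the falling factorials $k^{\underline j}=k(k-1)\cdots(k-j+1)$. The expansions needed are:
\begin{align*}
(k+1)_1&=k^{\underline1}+1,\\
(k+1)_2&=k^{\underline2}+4k^{\underline1}+2,\\
(k+1)_3&=k^{\underline3}+9k^{\underline2}+18k^{\underline1}+6,\\
(k+1)_4&=k^{\underline4}+16k^{\underline3}+72k^{\underline2}+96k^{\underline1}+24.
\end{align*}
In general the coefficient of $k^{\underline j}$ is $\binom{i}{j}\frac{i!}{j!}$ (Lah numbers). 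Substituting Step 2 then gives each $M_{n,i}$ directly.

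For instance, $M_{n,2}=n^{-2}\big(n(n+c)x^2+4nx+2\big)$, which is item (3). For $i=3$ one uses $(n/c)_3c^3=n(n+c)(n+2c)$, and for $i=4$ one uses $(n/c)_4c^4=n(n+c)(n+2c)(n+3c)$. Multiplying out and grouping by powers of $n$ reproduces items (4) and (5). The $n^3$ coefficient in (5) is $2x^3(3cx+8)$: it comes from $x^4(c+2c+3c)$ in the product together with the term $16x^3$.

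\textbf{Main obstacle.} The only real obstacle is the bookkeeping in the $i=4$ case. As a cross-check I would verify the formulas at $x=0$: there $p_{n,0}(0)=1$, so $M_{n,i}(0)$ should equal $i!/n^i$, which matches the constant terms $1,2,6,24$. I would also check the formulas against the $c\to0$ (Szász) limit.
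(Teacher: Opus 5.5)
Your proposal is correct. Your reading $\theta_{n,k}(t)=n e^{-nt}(nt)^k/k!$ is exactly the kernel the paper itself integrates in Lemma \ref{mgf}. I checked that your four rising-to-falling expansions, combined with $(n/c)_3c^3=n(n+c)(n+2c)$ and $(n/c)_4c^4=n(n+c)(n+2c)(n+3c)$, reproduce all five items coefficient by coefficient.

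The paper states this lemma without proof. The natural source of these formulas in the paper is the moment generating function of Lemma \ref{mgf}, $L_{n,c}(e^{\theta t},x)=n(n-\theta)^{n/c-1}(n-\theta(1+cx))^{-n/c}$. Differentiating it at $\theta=0$ by the Leibniz rule gives (\ref{directmoment}), an alternating sum in powers of $(1+cx)$. Both routes rest on the same structure, Gamma densities mixed by negative binomial weights, but they do the bookkeeping differently.

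The mgf packages all moments in one closed form, and the paper reuses it to extract leading coefficients in Theorems \ref{Convergence} and \ref{smoothfunction}. However, recovering the lemma's displayed polynomials from (\ref{directmoment}) requires re-expanding in $x$ with cancellation. For $r=2$, writing $a=n/c$ and $y=1+cx$, the sum $(a-1)(a-2)-2ya(a-1)+y^2a(a+1)$ has to collapse to $a^2(y-1)^2+a(y-1)(y+3)+2$.

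Your route instead yields directly
\[
M_{n,i}(x)=n^{-i}\sum_{j=0}^{i}\binom{i}{j}\frac{i!}{j!}\,(n/c)_j\,(cx)^j .
\]
This is a cancellation-free expansion in powers of $x$ that matches the layout of the lemma and shows the leading coefficient $n^{-i}(n/c)_i c^i$ at a glance.

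One small correction: the numbers $\binom{i}{j}\frac{i!}{j!}$ are not the classical Lah numbers $\binom{i-1}{j-1}\frac{i!}{j!}$, which expand $k(k+1)\cdots(k+i-1)$ rather than $(k+1)\cdots(k+i)$. Your coefficient formula is nonetheless right, by Vandermonde:
\[
\frac{(k+i)!}{k!}=i!\binom{k+i}{i}=\sum_{j=0}^{i}\binom{i}{j}\frac{i!}{j!}\,\Bigl(j!\binom{k}{j}\Bigr),
\]
where $j!\binom{k}{j}$ is the falling factorial $k(k-1)\cdots(k-j+1)$.
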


\begin{lem}\label{recurrencerelation}Let $\mu_{n,i}(x)=L_{n,c}((t-x)^i,x).$ Then, there holds the recurrence
\begin{equation}\label{id:recurrencerelation2}\mu_{n,m+1}(x)=\frac{1}{n}\Big(x(1+cx)\left[\mu'_{n,m}(x)+m\mu_{n,m-1}(x)\right]+(m+1)\mu_{n,m}(x)+mx\mu_{n,m-1}(x)\Big)
\end{equation}
\end{lem}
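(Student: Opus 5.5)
The plan is to derive two first-order differential identities, one for the summation basis $p_{n,k}$ and one for the integration kernel $\theta_{n,k}$, and combine them with a single integration by parts. Here we read the kernel in (\ref{defoperator}) as $\theta_{n,k}(t)=\frac{n e^{-nt}(nt)^k}{k!}$, a function of $t$ (the $x$ in the displayed formula appears to be a typo).

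\textbf{Step 1: the identity for $p_{n,k}$.} Logarithmic differentiation of $p_{n,k}(x)=\frac{(n/c)_k (cx)^k}{k!(1+cx)^{n/c+k}}$ gives
\[
\frac{p'_{n,k}(x)}{p_{n,k}(x)}=\frac{k}{x}-\frac{n+ck}{1+cx}=\frac{k-nx}{x(1+cx)}.
\]
Hence $x(1+cx)p'_{n,k}(x)=(k-nx)p_{n,k}(x)$.

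\textbf{Step 2: the identity for $\theta_{n,k}$.} In the same way, $t\,\theta'_{n,k}(t)=(k-nt)\,\theta_{n,k}(t)$. This holds also for $k=0$.

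\textbf{Step 3: differentiate $\mu_{n,m}$.} Differentiating $\mu_{n,m}(x)=\sum_k p_{n,k}(x)\int_0^\infty\theta_{n,k}(t)(t-x)^m\,dt$ term by term gives two contributions. One comes from $p'_{n,k}$. The other comes from $\partial_x(t-x)^m=-m(t-x)^{m-1}$ and equals $-m\mu_{n,m-1}$. Multiplying by $x(1+cx)$ and using Step 1,
\[
x(1+cx)\mu'_{n,m}(x)=\sum_k (k-nx)p_{n,k}(x)\int_0^\infty\theta_{n,k}(t)(t-x)^m\,dt-m\,x(1+cx)\mu_{n,m-1}(x).
\]

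\textbf{Step 4: split the weight and integrate by parts.} Write $k-nx=(k-nt)+n(t-x)$. The second piece contributes exactly $n\mu_{n,m+1}(x)$. For the first piece, Step 2 turns the integral into $\int_0^\infty t\,\theta'_{n,k}(t)(t-x)^m\,dt$. Integrating by parts, the boundary terms vanish: the factor $t$ kills the term at $0$, and $e^{-nt}$ kills the term at $\infty$. The result is
\[
-\int_0^\infty\theta_{n,k}(t)\big[(t-x)^m+m\,t\,(t-x)^{m-1}\big]dt.
\]
Writing $t=(t-x)+x$ and summing against $p_{n,k}$, this piece equals $-(m+1)\mu_{n,m}-m x\mu_{n,m-1}$.

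\textbf{Step 5: collect terms.} Combining Steps 3 and 4,
\[
x(1+cx)\mu'_{n,m}=n\mu_{n,m+1}-(m+1)\mu_{n,m}-mx\mu_{n,m-1}-m x(1+cx)\mu_{n,m-1}.
\]
Solving for $\mu_{n,m+1}$ gives (\ref{id:recurrencerelation2}).

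\textbf{Main obstacle: justification.} The only delicate point is justifying the interchange of the series, the derivative and the integral, and checking that the boundary terms really vanish. For polynomial test functions this follows from the exponential decay of $\theta_{n,k}$ and the geometric decay of $p_{n,k}(x)$ in $k$ for fixed $x$, locally uniformly. As a sanity check, the case $m=1$ can be compared with Lemma \ref{moments}.
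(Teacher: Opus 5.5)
Your proof is correct and follows essentially the same route as the paper. Both use the identities $x(1+cx)p'_{n,k}=(k-nx)p_{n,k}$ and $t\,\theta'_{n,k}=(k-nt)\theta_{n,k}$, the split $k-nx=(k-nt)+n(t-x)$, and one integration by parts in $t$; the paper only writes $t=(t-x)+x$ before integrating by parts rather than after, and your reading of $\theta_{n,k}$ as a function of $t$ and your check of the boundary terms are both appropriate.
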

\begin{proof}
We have $x(1+cx)\frac{d}{dx}p_{n,k}(x)=(k-nx)p_{n,k}$ and $\frac{d}{dt}\theta_{n,k}(x)=(\frac{k}{t}-n)\theta_{n,k}(t).$
Making use of the identity $k-nx=(\frac{k}{t}-n)(t-x)+(\frac{k}{t}-n)x+n(t-x)$ in the identity
\begin{equation*}
\mu'_{n,m}(x)=\sum_{k=0}^{\infty}\int_{0}^{\infty}\left(p'_{n,k}(x)(t-x)^m-m(t-x)^{m-1}p_{n,k}(x)\right)\theta_{n,k}(t)f(t)\,dt
\end{equation*}
we get
\begin{align*}
&x(1+cx)\left[\mu'_{n,m}(x)+m\mu_{n,m-1}(x)\right]\\
&=\sum_{k=0}^{\infty}p_{n,k}(x)\int_{0}^{\infty}\left(\left(\frac{k}{t}-n\right)\left[(t-x)^{m+1}+x(t-x)^m\right]+n(t-x)^{m+1}\right)\theta_{n,k}(t)\,dt\\
&=\sum_{k=0}^{\infty}p_{n,k}(x)\int_{0}^{\infty}\left[(t-x)^{m+1}+x(t-x)^m\right]\theta'_{n,k}(t)\,dt+n\mu_{n,m+1}(x).\\
\end{align*}
Integration by parts implies
\begin{align*}
&x(1+cx)\left[\mu'_{n,m}(x)+m\mu_{n,m-1}(x)\right]\\
&=-\sum_{k=0}^{\infty}p_{n,k}(x)\int_{0}^{\infty}\left(m+1)(t-x)^m+mx(t-x)^{m+1}\right)\theta_{n,k}(t)\,dt+n\mu_{n,m+1}(x)\\
&=-(m+1)\mu_{n,m}(x)-mx \mu_{n,m-1}(x)+n\mu_{n,m+1}(x).
\end{align*}
This completes the proof.
\end{proof}
\begin{corollary}\label{orderofmoments}For $x\in [0,\infty)$ and $n\geq m,$ using lemma\ref{recurrencerelation}, we obtain
\begin{enumerate}
\item $\mu_{n,0}(x)=1$
\item $\mu_{n,1}(x)=\frac{1}{n}$
\item $\mu_{n,2}(x)=\frac{c n x^2+2 n x+2}{n^2}$
\item $\mu_{n,3}(x)=\frac{n x \left(2 c^2 x^2+9 c x+12\right)+6}{n^3}$
\item $\mu_{n,4}(x)=\frac{2 n x \left(3 c^3 x^3+16 c^2 x^2+36 c x+36\right)+3 n^2 x^2 (c x+2)^2+24}{n^4}$
\end{enumerate}
It follows by an easy induction that $\mu_{n,m}(x)$ is a polynomial in $x$ of degree at most $m.$ Further, we have
\[\mu_{n,m}(x)=O\left(n^{\lfloor \frac{m+1}{2}\rfloor}\right).\]
\end{corollary}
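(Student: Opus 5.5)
The plan is induction on $m$ driven by the recurrence (\ref{id:recurrencerelation2}) of Lemma \ref{recurrencerelation}. For the base cases, $\mu_{n,0}=M_{n,0}=1$ and $\mu_{n,1}=M_{n,1}-x=\frac1n$ come straight from Lemma \ref{moments}.

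The explicit formulas (3)–(5) then follow by feeding these into (\ref{id:recurrencerelation2}) with $m=1,2,3$. For instance, with $m=1$, $\mu'_{n,1}=0$, so
\[
\mu_{n,2}(x)=\frac1n\Big(x(1+cx)+\frac2n+x\Big)=\frac{cnx^2+2nx+2}{n^2}.
\]
The cases $m=2,3$ are the same kind of routine differentiation and collection of terms. As a cross-check, I would also expand $(t-x)^i$ binomially and use the $M_{n,j}$ of Lemma \ref{moments}.

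For the general statements I would prove a strengthened hypothesis by induction. Let $P(m)$ say that $\mu_{n,m}(x)=\sum_{j=0}^{m}a_{m,j}(n)x^j$ is a polynomial of degree at most $m$ whose coefficients satisfy $a_{m,j}(n)=O\big(n^{-\lfloor (m+1)/2\rfloor}\big)$, uniformly in $j$. The claimed order should be read as $O\big(n^{-\lfloor\frac{m+1}{2}\rfloor}\big)$; the positive exponent in the statement is evidently a sign slip, since $\mu_{n,1}=1/n$ and $\mu_{n,2}=O(1/n)$. This coefficient form of the bound is what makes the induction close, because a bound on $\mu_{n,m}$ alone says nothing about $\mu'_{n,m}$. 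With the coefficient form, $\mu'_{n,m}$ automatically has coefficients of the same order.

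Assume $P(m-1)$ and $P(m)$. Degrees first: $x(1+cx)\mu'_{n,m}$ and $x(1+cx)\mu_{n,m-1}$ have degree at most $m+1$, and $\mu_{n,m}$ and $x\mu_{n,m-1}$ have degree at most $m$, so $\mu_{n,m+1}$ has degree at most $m+1$.

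Orders next: the terms involving $\mu_{n,m}$ contribute coefficients $O(n^{-\lfloor(m+1)/2\rfloor})$, and those involving $\mu_{n,m-1}$ contribute $O(n^{-\lfloor m/2\rfloor})$. Hence, using $1+\lfloor m/2\rfloor=\lfloor (m+2)/2\rfloor$,
\[
a_{m+1,j}(n)=O\Big(n^{-1-\min\{\lfloor \frac{m+1}{2}\rfloor,\lfloor \frac m2\rfloor\}}\Big)=O\Big(n^{-1-\lfloor\frac m2\rfloor}\Big)=O\Big(n^{-\lfloor\frac{m+2}{2}\rfloor}\Big).
\]
This is $P(m+1)$. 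Uniformity in $x$ then holds on every compact subset of $[0,\infty)$, since there are finitely many coefficients and $x$ is bounded. The only real obstacle is choosing this coefficientwise induction hypothesis, which controls the derivative term; the rest is bookkeeping.
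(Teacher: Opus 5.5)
Your proposal is correct and is essentially the paper's argument. The paper just feeds $\mu_{n,0},\mu_{n,1}$ into the recurrence of Lemma~\ref{recurrencerelation} and invokes an ``easy induction.'' Your coefficientwise hypothesis is the detail needed to make that induction close, carrying out $m=2,3$ in the recurrence does reproduce the printed $\mu_{n,3}$ and $\mu_{n,4}$, and your reading of the order as $O\big(n^{-\lfloor (m+1)/2\rfloor}\big)$ is right (the printed bound follows from it a fortiori).
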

\begin{lem}\label{mgf}Let $M_{n,r}(x)=L_{n,c}(t^r,x).$ Then,
\begin{equation}\label{directmoment}M_{n,r}(x)=n^{-r}\sum_{j=0}^{r}\binom{r}{j}(1+cx)^j(n/c)_j(-1)^{r-j}\prod_{k=1}^{r-j}(n/c-k).\end{equation}
\end{lem}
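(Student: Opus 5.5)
The plan is to compute the moment generating function of $L_{n,c}$ in closed form and then read off $M_{n,r}(x)$ by differentiating $r$ times at the origin with the Leibniz rule. Throughout I read the kernel as $\theta_{n,k}(t)=\frac{n e^{-nt}(nt)^k}{k!}$; the variable $x$ in the displayed definition (\ref{defoperator}) is evidently a misprint for $t$.

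\emph{Step 1: the inner integral.} For $s<n$,
\[
\int_0^\infty \theta_{n,k}(t)e^{st}\,dt=\frac{n^{k+1}}{k!}\int_0^\infty t^k e^{-(n-s)t}\,dt=\Big(\frac{n}{n-s}\Big)^{k+1}.
\]

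\emph{Step 2: the outer sum.} The weights $p_{n,k}(x)$ are negative binomial probabilities with parameter $n/c$ and ratio $q=\frac{cx}{1+cx}$. By the binomial series,
\[
\sum_{k\ge0}p_{n,k}(x)u^k=(1+cx-cxu)^{-n/c}\qquad\text{for } |u|<\tfrac{1+cx}{cx}.
\]
Taking $u=n/(n-s)$ and simplifying $1+cx-cx\frac{n}{n-s}=\frac{n-s(1+cx)}{n-s}$ gives
\[
L_{n,c}(e^{st},x)=n\,(n-s)^{n/c-1}\,\bigl(n-s(1+cx)\bigr)^{-n/c}.
\]
This holds for $|s|$ small; in particular $0<s<\frac{n}{1+cx}$ ensures convergence.

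\emph{Step 3: differentiation.} Since $M_{n,r}(x)=\frac{d^r}{ds^r}L_{n,c}(e^{st},x)\big|_{s=0}$, I apply Leibniz to the product of the two powers, indexing by $j$ the derivatives falling on the second factor. At $s=0$:
\[
\frac{d^{r-j}}{ds^{r-j}}(n-s)^{n/c-1}=(-1)^{r-j}\prod_{k=1}^{r-j}\Big(\frac nc-k\Big)\,n^{n/c-1-(r-j)},
\]
\[
\frac{d^{j}}{ds^{j}}\bigl(n-s(1+cx)\bigr)^{-n/c}=\Big(\frac nc\Big)_j(1+cx)^j\,n^{-n/c-j}.
\]
Multiplying these with the prefactor $n$, all powers of $n$ combine to $n^{-r}$, which yields exactly (\ref{directmoment}). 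As a check, $r=1$ gives $n^{-1}\bigl((1+cx)\tfrac nc-(\tfrac nc-1)\bigr)=x+\tfrac1n$, agreeing with Lemma \ref{moments}.

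\emph{Main obstacle.} The delicate step is justifying that the mgf is analytic near $s=0$ and may be differentiated under the sum and integral. The series in $k$ converges absolutely and uniformly for $|s|\le \delta<\frac{n}{1+cx}$, so termwise differentiation is legitimate. Alternatively, one can avoid analyticity altogether: $\int\theta_{n,k}t^r\,dt=(k+1)_r/n^r$, and applying $\frac{d^r}{du^r}u^{r}$ to the generating function at $u=1$ gives an equivalent finite formula. That formula can then be matched to (\ref{directmoment}) by comparing coefficients, although this is messier than the mgf route.
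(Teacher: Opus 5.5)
Your proof is correct and follows the same route as the paper: you compute the closed form $L_{n,c}(e^{st},x)=n(n-s)^{n/c-1}(n-s(1+cx))^{-n/c}$ from the Gamma integral and the negative binomial generating function, then apply the Leibniz rule at $s=0$. Beyond what the paper gives, you justify differentiating the mgf (the formula actually holds for all $s<n/(1+cx)$, which contains a two-sided neighborhood of $0$) and check the case $r=1$ against Lemma \ref{moments}.
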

\begin{proof}
We have
\begin{align*}
L_{n,c}(e^{\theta t},x)&=\sum_{k=0}^{\infty}p_{n,k}(x)\int_{0}^{\infty}\frac{n}{k!}(nt)^ke^{-(n-\theta)t}\,dt\\
&=\sum_{k=0}^{\infty}p_{n,k}(x)\left(\frac{n}{n-\theta}\right)^{k+1}\\
&=\left(\frac{n(n-\theta)^{n/c-1}}{(n-\theta(1+cx))^{n/c}}\right).
\end{align*}
Now, (\ref{directmoment}) follows by the linearity of the operator $L_{n,c}(f,x).$
By Leibniz rule we get
\begin{align*}
M_{n,r}(x)&=\frac{\partial^r}{\partial \theta^r}\left(\frac{n(n-\theta)^{n/c-1}}{(n-\theta(1+cx))^{n/c}}\right)\Bigg|_{\theta=0}\\
&=n\sum_{j=0}^{r}\binom{r}{j}\frac{\partial^j}{\partial \theta^j}(n-\theta(1+cx))^{-n/c}\frac{\partial^{r-j}}{\partial \theta^{r-j}}
(n-\theta)^{n/c-1}\Bigg|_{\theta=0}\\
&=n\sum_{j=0}^{r}\binom{r}{j}(1+cx)^j(n/c)_jn^{-r-1}(-1)^{r-j}\prod_{k=1}^{r-j}(n/c-k)\Bigg|_{\theta=0}\\
&=n^{-r}\sum_{j=0}^{r}\binom{r}{j}(1+cx)^j(n/c)_j(-1)^{r-j}\prod_{k=1}^{r-j}(n/c-k).\end{align*}

\end{proof}

\begin{corollary}\label{expressioncentralmoments}For the functions $\mu_{n,r}(x), r\in N_0,$ we have the formula
\begin{equation}\label{directcentralmoment}\mu_{n,r}(x)=\sum_{r=0}^{s}\sum_{j=0}^{r}\binom{s}{r}\binom{r}{j}(-1)^{s-j}n^{-r}(1+cx)^j(n/c)_j
\prod_{k=1}^{r-j}(n/c-k).\end{equation}
The functions  $\mu_{n,r}(x)$ and $M_{n,r}(x), r\in N_0,$ are polynomials of degree $r$ and can be expressed in the powers of
$(1+cx).$ \end{corollary}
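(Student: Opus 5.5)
The plan is to derive the corollary from Lemma \ref{mgf} by a binomial expansion. Linearity of $L_{n,c}$ will then yield the central moments. We write the index of the central moment as $s$, which is the convention the displayed formula uses implicitly, so the claim concerns $\mu_{n,s}(x)=L_{n,c}((t-x)^s,x)$.

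\textbf{Step 1: reduce to raw moments.} We expand $(t-x)^s=\sum_{r=0}^{s}\binom{s}{r}t^r(-x)^{s-r}$. Applying $L_{n,c}$ gives
\[
\mu_{n,s}(x)=\sum_{r=0}^{s}\binom{s}{r}(-x)^{s-r}M_{n,r}(x).
\]
Substituting (\ref{directmoment}) for each $M_{n,r}(x)$ produces a double sum over $r$ and $j$ of the shape in (\ref{directcentralmoment}). As it stands, however, it carries the factor $(-x)^{s-r}$ rather than a pure power of $(1+cx)$.

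\textbf{Step 2: express everything in powers of $(1+cx)$.} This step reconciles the double sum with the stated formula. We write $x=\big((1+cx)-1\big)/c$, so that $(-x)^{s-r}$ becomes a polynomial in $(1+cx)$ of degree $s-r$. A cleaner route works at the level of the moment generating function from the proof of Lemma \ref{mgf}:
\[
L_{n,c}(e^{\theta(t-x)},x)=e^{-\theta x}\,\frac{n(n-\theta)^{n/c-1}}{(n-\theta(1+cx))^{n/c}}.
\]
Differentiating $s$ times at $\theta=0$ with the Leibniz rule gives $\mu_{n,s}$ directly as a finite sum of products. Each product involves $(n/c)_j$, the falling products $\prod_{k}(n/c-k)$, powers of $n^{-1}$, and powers of $x$ and $(1+cx)$. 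We will regroup this sum into the stated double sum and check the sign $(-1)^{s-j}$ and the binomial weights $\binom{s}{r}\binom{r}{j}$.

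\textbf{Step 3: the polynomial and degree statements.} In (\ref{directmoment}) the $j$-th term is a constant (depending on $n,c$) times $(1+cx)^j$ with $j\le r$. The top term $j=r$ has coefficient $\binom{r}{r}(n/c)_r n^{-r}\neq 0$. Hence $M_{n,r}$ is a polynomial in $(1+cx)$ of exact degree $r$, and therefore a polynomial in $x$ of degree $r$ with leading coefficient $c^r(n/c)_r n^{-r}$. For $\mu_{n,s}$, the expansion in Step 1 shows it is a polynomial in $x$, hence in $(1+cx)$, of degree at most $s$. Exact degree $s$ requires the coefficient of $x^s$, namely $\sum_{r}\binom{s}{r}(-1)^{s-r}c^r(n/c)_r n^{-r}$, to be nonzero. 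Corollary \ref{orderofmoments} shows this for small $s$; for example, $\mu_{n,2}$ has leading coefficient $c/n$. For general $s$, we would argue that the coefficient is a polynomial in $1/n$ whose lowest-order nonvanishing term can be identified, but this needs care.

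\textbf{Main obstacle.} The hardest part is the bookkeeping in Step 2. The formula as stated has $x$ appearing only through $(1+cx)^j$, so the factor $(-x)^{s-r}$ must either be absorbed or the formula must be read with an implicit conversion. We would first test the displayed identity against $\mu_{n,1}=1/n$ and $\mu_{n,2}$ from Corollary \ref{orderofmoments}. If it fails literally, we would state the corrected version obtained from Step 1. A secondary difficulty is the exact-degree claim for $\mu_{n,s}$, since the cancellation of leading terms must be ruled out.
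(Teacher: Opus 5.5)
Your Step 1 is the right derivation, and the paper evidently intends the corollary to follow from Lemma \ref{mgf} in this way (it gives no separate proof). The gap is in Step 2. It aims to regroup the sum into the displayed identity, but that identity is false, so no regrouping can reach it. Substituting (\ref{directmoment}) into $\mu_{n,s}=\sum_r\binom{s}{r}(-x)^{s-r}M_{n,r}$ gives
\[
\mu_{n,s}(x)=\sum_{r=0}^{s}\sum_{j=0}^{r}\binom{s}{r}\binom{r}{j}(-1)^{s-j}\,x^{s-r}\,n^{-r}(1+cx)^j(n/c)_j\prod_{k=1}^{r-j}(n/c-k).
\]
This is the displayed formula with the factor $x^{s-r}$ restored. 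Your own mgf route produces exactly this factor, since $\frac{\partial^{s-r}}{\partial\theta^{s-r}}e^{-\theta x}\big|_{\theta=0}=(-x)^{s-r}$. The test you propose settles the matter at once. For $s=1$ the displayed double sum equals $-1-\frac{1}{n}\left(\frac{n}{c}-1\right)+\frac{1}{n}\cdot\frac{n}{c}(1+cx)=x-1+\frac{1}{n}$, whereas $\mu_{n,1}(x)=\frac{1}{n}$. So you should commit to the corrected formula rather than leave the outcome open. The clause about powers of $(1+cx)$ is then immediate: since $c\neq0$, you can write $x=((1+cx)-1)/c$.

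The exact-degree claim for the central moments is also false. Your remark that Corollary \ref{orderofmoments} confirms it for small $s$ is wrong, because $\mu_{n,1}=1/n$ is constant: its $x$-coefficient $-1+c\cdot\frac{n}{c}\cdot\frac{1}{n}$ vanishes. Your argument for $M_{n,r}$ is complete. For $\mu_{n,s}$ the correct statement is degree at most $s$, with equality exactly when $s\neq1$. To prove equality for $s\geq2$, skip the delicate $1/n$-expansion and take the logarithm of the moment generating function of $t-x$:
\[
\log L_{n,c}\big(e^{\theta(t-x)},x\big)=\frac{\theta}{n}+\sum_{m\geq2}\frac{\theta^m}{m\,n^{m-1}}\left(\frac{(1+cx)^m-1}{c}+\frac{1}{n}\right).
\]
Hence $\kappa_1=1/n$, and for $m\geq2$ the cumulant $\kappa_m$ has degree exactly $m$ in $x$, with leading coefficient $(m-1)!\,c^{m-1}n^{1-m}>0$. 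In the moment--cumulant formula $\mu_{n,s}=\sum_{\pi}\prod_{B\in\pi}\kappa_{|B|}$, only set partitions without singletons reach degree $s$, and each contributes a positive leading coefficient. Such partitions exist if and only if $s\geq2$.
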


%

 \begin{lem}\label{lemma5,hoa} There exist the polynomials $q_{i,j,r}(x)$ independent of
 $n$ and $\nu$ such that
 \begin{equation*}\frac{d^r}{dt^r}p_{n,k}(x)=\sum_{2i+j\leqslant
r\atop i,j\geqslant0}n^i(k-n x)^j\frac{q_{i,j,r}(x)}{x^r(1+cx)^r}p_{n,k}(x).
\end{equation*}\end{lem}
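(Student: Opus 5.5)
Throughout, the derivative in the statement is read as $\frac{d^r}{dx^r}$: $p_{n,k}$ does not depend on $t$, and the weights $x^r(1+cx)^r$ confirm this. I would argue by induction on $r$, using the first-order identity already used in Lemma~\ref{recurrencerelation}:
\[
x(1+cx)\,p'_{n,k}(x)=(k-nx)\,p_{n,k}(x).
\]
The base case $r=0$ holds with $q_{0,0,0}=1$. The case $r=1$ holds with $q_{0,1,1}=1$ and all other $q_{i,j,1}=0$, which is exactly the identity above.

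For the inductive step, assume the representation for $r$ and differentiate a generic term
\[
T = n^i (k-nx)^j\, q(x)\,\bigl(x(1+cx)\bigr)^{-r} p_{n,k}(x), \qquad q=q_{i,j,r}.
\]
By the product rule there are four contributions.

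First, differentiating $(k-nx)^j$ gives $-nj(k-nx)^{j-1}$. After rewriting over the new denominator $(x(1+cx))^{r+1}$, this produces a term of type $n^{i+1}(k-nx)^{j-1}$ with polynomial coefficient $-j\,q(x)\,x(1+cx)$. The new index satisfies $2(i+1)+(j-1)=2i+j+1\le r+1$.

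Second, differentiating $q$ gives $q'(x)\,x(1+cx)$ with the same $(i,j)$. Here $2i+j\le r\le r+1$.

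Third, differentiating $(x(1+cx))^{-r}$ gives $-r(1+2cx)(x(1+cx))^{-r-1}$. The coefficient becomes $-r(1+2cx)q$, again with $(i,j)$ unchanged.

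Fourth, differentiating $p_{n,k}$ and using the identity gives $(k-nx)(x(1+cx))^{-1}p_{n,k}$. This produces index $(i,j+1)$ with coefficient $q$, and $2i+j+1\le r+1$.

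Collecting terms, I would define $q_{i,j,r+1}$ as the sum of the four contributions. Explicitly,
\[
q_{i,j,r+1}=x(1+cx)\bigl[q_{i,j,r}'-(j+1)q_{i-1,j+1,r}\bigr]-r(1+2cx)q_{i,j,r}+q_{i,j-1,r},
\]
with the convention that polynomials with out-of-range indices are zero. These are clearly polynomials and are independent of $n$ and $k$, since only $x$, $c$, $r$ and $j$ enter. This gives the stated form for $r+1$.

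There is no real obstacle. The main care point is the bookkeeping of the constraint $2i+j\le r+1$ in the first contribution. That contribution raises the power of $n$ while lowering the power of $(k-nx)$, which is precisely why the weight $2i+j$ rather than $i+j$ is the right invariant. I would also note that the identity holds for $x>0$, and that $k=0$ causes no trouble because the identity is valid for every $k\ge 0$.
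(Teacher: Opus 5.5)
The paper gives no proof of this lemma; it is stated as a standard auxiliary fact, and the usual proof in the literature is the same induction on $r$ you give, driven by $x(1+cx)p_{n,k}'(x)=(k-nx)p_{n,k}(x)$. Your argument is correct and complete. The four product-rule contributions, the weight bookkeeping $2i+j\le r+1$, and the recurrence for $q_{i,j,r+1}$ all check out: for $r=2$ it gives $q_{0,2,2}=1$, $q_{0,1,2}=-(1+2cx)$ and $q_{1,0,2}=-x(1+cx)$, which agrees with direct differentiation. Your readings of $d/dt$ as $d/dx$ and of $\nu$ as $k$ are also the right ones.
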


\begin{lem}\label{lemma4,hoa} Let $\delta$ and $\gamma$ be any two positive real numbers. Then, for any
 $s>0$ and  $0<a<b<\infty,$ we have,
 \[\sup\limits_{x\in [a,b]}\Bigg|\sum_{k=0}^{\infty}p_{n,k}(x)
\int\limits_{|t-x|\geq \delta}\theta_{n,k}(t)e^{\gamma t}\,dt\Bigg|=O\big(n^{-s}\big).\]
 \end{lem}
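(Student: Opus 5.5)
Fix $x\in[a,b]$. I would reduce the bound to high-order central moments by a Chebyshev-type estimate combined with Cauchy--Schwarz. On the set $|t-x|\ge\delta$ we have $1\le (t-x)^{2m}/\delta^{2m}$ for every $m\in\mathbb N$. Writing $I_k=\int_{|t-x|\ge\delta}\theta_{n,k}(t)e^{\gamma t}\,dt$ and applying Cauchy--Schwarz, first in $t$ against the probability density $\theta_{n,k}$ and then in $k$ against the weights $p_{n,k}(x)$, gives
\begin{equation*}
\sum_{k=0}^{\infty}p_{n,k}(x)I_k\le \delta^{-2m}\Big(\sum_{k}p_{n,k}(x)\int_0^\infty\theta_{n,k}(t)(t-x)^{4m}dt\Big)^{1/2}\Big(\sum_{k}p_{n,k}(x)\int_0^\infty\theta_{n,k}(t)e^{2\gamma t}dt\Big)^{1/2}.
\end{equation*}
The first factor is $\mu_{n,4m}(x)^{1/2}$ and the second is $L_{n,c}(e^{2\gamma t},x)^{1/2}$. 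Throughout, $\theta_{n,k}$ is read as the density in $t$, i.e. $\theta_{n,k}(t)=\frac{n e^{-nt}(nt)^k}{k!}$, as in the proof of Lemma \ref{mgf}; the $x$ in the displayed definition is a misprint.

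\textbf{Exponential factor.} By the computation in Lemma \ref{mgf}, for $n>2\gamma(1+cb)$,
\[L_{n,c}(e^{2\gamma t},x)=\frac{n(n-2\gamma)^{n/c-1}}{(n-2\gamma(1+cx))^{n/c}}=\Big(1-\frac{2\gamma}{n}\Big)^{-1}\Big(\frac{1-2\gamma/n}{1-2\gamma(1+cx)/n}\Big)^{n/c}.\]
This tends to $e^{2\gamma x}$ and is bounded uniformly for $x\in[a,b]$ and large $n$, which follows by taking logarithms.

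\textbf{Moment factor (main obstacle).} The whole argument needs the sharp order $\mu_{n,r}(x)=O(n^{-\lfloor (r+1)/2\rfloor})$, uniformly on $[a,b]$. Corollary \ref{orderofmoments} states the exponent with the wrong sign, but the listed moments $\mu_{n,1},\dots,\mu_{n,4}$ clearly show decay. I would prove the decay by induction from the recurrence (\ref{id:recurrencerelation2}). Assume $\mu_{n,j}$ is a polynomial in $x$ of degree at most $j$ whose coefficients are $O(n^{-\lfloor (j+1)/2\rfloor})$; then $\mu'_{n,j}$ has coefficients of the same order. The right-hand side of (\ref{id:recurrencerelation2}) is $\frac1n$ times terms of order $n^{-\lfloor (m+1)/2\rfloor}$ and $n^{-\lfloor m/2\rfloor}$. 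This yields order $n^{-1-\lfloor m/2\rfloor}=n^{-\lfloor (m+2)/2\rfloor}$, which closes the induction with uniformity on compacts. Consequently $\mu_{n,4m}(x)=O(n^{-2m})$.

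\textbf{Conclusion.} Combining the two factors gives a bound $C\delta^{-2m}n^{-m}$ uniformly on $[a,b]$. Choosing $m\ge s$ finishes the proof; the term $k=0$ requires no separate treatment, since the sum runs over all $k$. The only delicate point is the correct moment order.
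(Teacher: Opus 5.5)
The paper states this lemma without proof, so there is no argument of the authors to compare with. Your proposal supplies one, and it is correct and complete. It also follows the standard route for lemmas of this type: a Chebyshev-type bound followed by Cauchy--Schwarz, which reduces the tail sum to
\[
\sum_k p_{n,k}(x)I_k\le\delta^{-2m}\,\mu_{n,4m}(x)^{1/2}\,L_{n,c}(e^{2\gamma t},x)^{1/2}.
\]

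The individual steps all hold:
\begin{itemize}
\item Your closed form for $L_{n,c}(e^{2\gamma t},x)$ is the moment generating function computed in Lemma \ref{mgf}. Under your condition $n>2\gamma(1+cb)$ it is finite and uniformly bounded on $[a,b]$; for instance, the logarithm of the $n/c$-power is at most $4\gamma b$ once $2\gamma(1+cb)/n\le 1/2$.
\item Your induction on the recurrence (\ref{id:recurrencerelation2}) correctly gives $\mu_{n,r}(x)=O(n^{-\lfloor (r+1)/2\rfloor})$ uniformly on compacts, since the degree stays at most $r$ and the coefficients are polynomials in $1/n$. Hence $\mu_{n,4m}(x)^{1/2}=O(n^{-m})$.
\item Choosing $m\ge s$ finishes the proof. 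The absolute value in the statement is harmless because the sum is nonnegative.
\end{itemize}

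You are right that Corollary \ref{orderofmoments} has the sign of the exponent wrong. You are also right that $\theta_{n,k}$ must be read as a density in $t$. The decaying moment order you prove is exactly what this lemma needs.

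Your induction rests on Lemma \ref{recurrencerelation}, so one point there is worth recording. In its proof, the integration by parts produces a boundary term at $t=0$ for $k=0$, where $\theta_{n,0}(0)=n\neq 0$. This term vanishes because $(t-x)^{m+1}+x(t-x)^m=0$ at $t=0$. So the recurrence is valid as you use it.
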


\begin{lem}\label{derivativetransfer}Let $f\in C^{r}[0,\infty).$  Then,
\begin{equation}\label{id:derivativetransfer}
\frac{d^r}{dw^r}L_{n,c}(f,w)\Big|_{w=x}=\sum_{k=0}^{\infty}p_{n+r,k}(x)\int_{0}^{\infty}\theta_{n,k+r}(t)f^{(r)}(t)\,dt
\end{equation}
\end{lem}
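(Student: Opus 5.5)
The plan is induction on $r$. Each step combines the differentiation formula for the summation basis $p_{n,k}$ with an Abel summation in $k$ and an integration by parts in $t$. Throughout, $\theta_{n,k}(t)=\frac{n e^{-nt}(nt)^k}{k!}$, with the variable $t$ in place of the misprinted $x$ in the definition. Write $m=n/c$ and let $p^{[m]}_{k}(x)=\frac{(m)_k (cx)^k}{k!(1+cx)^{m+k}}$, so that $p_{n,k}=p^{[n/c]}_{k}$. I read $p_{n+r,k}$ in (\ref{id:derivativetransfer}) as the basis whose parameter $n/c$ is raised by $r$, namely $p^{[n/c+r]}_{k}$. I set $p^{[\cdot]}_{-1}\equiv 0$ and $\theta_{n,-1}\equiv 0$.

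\emph{Step 1 (basis identities).} A direct differentiation, using $(m)_k(m+k)=m\,(m+1)_k$ and $(m)_k=m\,(m+1)_{k-1}$, gives
\begin{equation*}
\frac{d}{dx}p^{[m]}_{k}(x)=cm\left(p^{[m+1]}_{k-1}(x)-p^{[m+1]}_{k}(x)\right).
\end{equation*}
Similarly, for all $k\ge -1$,
\begin{equation*}
\frac{d}{dt}\theta_{n,k+1}(t)=n\left(\theta_{n,k}(t)-\theta_{n,k+1}(t)\right).
\end{equation*}

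\emph{Step 2 (one derivative).} Assume $f$ has at most exponential growth, as is implicit in Lemma \ref{lemma4,hoa}. Then the series defining $L_{n,c}$ and its formally differentiated series converge uniformly on compact subsets of $[0,\infty)$, so termwise differentiation is legitimate. Differentiating and shifting the index in the $p^{[m+1]}_{k-1}$ part (Abel summation) gives
\begin{equation*}
\frac{d}{dx}\sum_k p^{[m]}_{k}(x)\int_0^\infty\theta_{n,k+j}f
=cm\sum_{k\ge0}p^{[m+1]}_{k}(x)\int_0^\infty\left(\theta_{n,k+j+1}-\theta_{n,k+j}\right)f\,dt .
\end{equation*}
By Step 1, $\theta_{n,k+j+1}-\theta_{n,k+j}=-\frac1n\theta'_{n,k+j+1}$. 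Integrating by parts, the boundary terms vanish: $\theta_{n,k+j+1}(0)=0$, and $e^{-nt}$ dominates the growth of $f$ at infinity. Hence
\[\int_0^\infty\left(\theta_{n,k+j+1}-\theta_{n,k+j}\right)f\,dt=\frac1n\int_0^\infty\theta_{n,k+j+1}f'\,dt.\]

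\emph{Step 3 (induction).} Apply Step 2 $r$ times, starting with $j=0$ and $m=n/c$. At the $j$-th step the parameter passes from $n/c+j$ to $n/c+j+1$, the index of $\theta$ rises by one, and one derivative moves onto $f$. After $r$ steps this yields
\[\frac{d^r}{dx^r}L_{n,c}(f,x)=\prod_{j=0}^{r-1}\frac{n+jc}{n}\;\sum_k p^{[n/c+r]}_{k}(x)\int_0^\infty\theta_{n,k+r}(t)f^{(r)}(t)\,dt .\]
Here the $j$-th factor $c(n/c+j)/n=(n+jc)/n$ comes from that step. The requirement $f\in C^r$ is used at each integration by parts.

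The main obstacle is bookkeeping rather than analysis. Because the basis parameter increases at each step, the factor produced by Step 1 at the $j$-th step is $n+jc$, whereas the factor absorbed by $\theta'$ is $n$. So (\ref{id:derivativetransfer}) holds exactly only after the constant $\prod_{j=0}^{r-1}(1+jc/n)$ is absorbed into the notation $p_{n+r,k}$. This constant is $1+O(1/n)$ and harmless for all subsequent estimates. I would state the identity with this constant explicit. The only genuinely analytic points are the uniform convergence needed for termwise differentiation and the vanishing of the boundary terms, both handled by the exponential decay of $\theta_{n,k}$.
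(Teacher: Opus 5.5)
Your route is the paper's own: the differentiation formulas for $p_{n,k}$ and $\theta_{n,k}$, a shift of the summation index, an integration by parts in $t$, and induction on $r$. Where you depart from the displayed statement, you are right and the paper is not.

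The paper's formula $\frac{d}{dx}p_{n,k}(x)=n\bigl(p_{n+1,k-1}(x)-p_{n+1,k}(x)\bigr)$ holds only if $p_{n+1,k}$ denotes the basis with parameter $n/c+1$. That is your $p^{[n/c+1]}_k$, which is $p_{n+c,k}$ in the literal notation. The paper's ``straightforward'' induction also ignores that the prefactor produced at the $j$-th step is $n+jc$, not $n$.

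So the identity as displayed fails for every $r\geq 2$. Take $f(t)=t^2$ and $r=2$. Lemma \ref{moments} gives $\frac{d^2}{dx^2}L_{n,c}(t^2,x)=2(1+c/n)$. The right-hand side equals $2$ under any reading of $p_{n+2,k}$, since $\sum_k p_{n+2,k}(x)=1$ and $\int_0^\infty\theta_{n,k+2}(t)\,dt=1$.

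Your constant $\prod_{j=0}^{r-1}(1+jc/n)=c^r(n/c)_r n^{-r}$ is exactly $\lambda_n(c,r)$; the $c^n$ in its definition is a misprint for $c^r$. This is the normalisation the paper itself uses in Theorems \ref{Convergence} and \ref{smoothfunction} and in the term $B_1$ of Theorem \ref{th1Simultaneous}. Stating the identity with this constant explicit is therefore the correct fix.

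The constant is $1+O(n^{-1})$, but it is not harmless everywhere, contrary to your remark. The paper's last theorem uses the identity without it. For $f(t)=t^r$ we have $\omega(f^{(r)},\cdot)\equiv 0$, yet the error is $r!(\lambda_n(c,r)-1)\neq 0$. So that theorem needs an extra $O(n^{-1})|f^{(r)}(x)|$ term.

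One repair is needed in your own argument. A growth bound on $f$ alone does not control $f',\dots,f^{(r)}$. For example, $f(t)=\sin(e^{t^2})$ is bounded, but $f'$ is not even integrable against $\theta_{n,k}$. Your argument needs such control, because the boundary terms at the $j$-th integration by parts involve $\theta_{n,k+j+1}(t)f^{(j)}(t)$, and the final integrals involve $f^{(r)}$.

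Assume instead $|f^{(r)}(t)|\leq Ce^{\gamma t}$; this passes to the lower derivatives by integration. Then work on $x\in[0,X]$ with $n>\gamma(1+cX)$. This condition is exactly what makes $\sum_k p_{n,k}(x)\bigl(\frac{n}{n-\gamma}\bigr)^{k+1}$ and its differentiated versions converge uniformly. Your phrase ``compact subsets of $[0,\infty)$'' is too generous for fixed $n$. With that hypothesis, every step you wrote goes through.
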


\begin{proof} Since, \[\frac{d}{dx}p_{n,k}(x)=n\left(p_{n+1,k-1}(x)-p_{n+1,k}(x)\right)\] and \[\frac{d}{dt}\theta_{n,k}(t)=n\left(\theta_{n,k-1}(t)-\theta_{n,k}(t)\right),\] for $r=1,$ we have
\begin{align*}
&\frac{d}{dw}L_{n,c}(f,w)\Big|_{w=x}\\
&=\sum_{k=0}^{\infty}n\left(p_{n+1,k-1}(x)-p_{n+1,k}(x)\right)\int_{0}^{\infty}\theta_{n,k+r}(t)f^{(r)}(t)\,dt\\
&=\sum_{k=0}^{\infty}np_{n+1,k}(x)\int_{0}^{\infty}\left(\theta_{n,k+1}(t)-\theta_{n,k}(t)\right)f(t)\,dt\\
&=-\sum_{k=0}^{\infty}p_{n+1,k}(x)\int_{0}^{\infty}\theta'_{n,k+1}(t)f(t)\,dt.\\
\end{align*}
Integration by parts implies
\begin{align*}
\frac{d}{dw}L_{n,c}(f,w)\Big|_{w=x}&=\sum_{k=0}^{\infty}p_{n+1,k}(x)\int_{0}^{\infty}\theta_{n,k+1}(t)f'(t)\,dt.
\end{align*}
Now, by straightforward calculations and principal of mathematical induction, the proof follows.
\end{proof}
\begin{note} For simultaneous approximation properties of the operator $L_{n,c}(f,x),$ we use the transformed operator $L_{n,c,r}(f,x).$ This operator is defined by
\begin{equation}\label{associatedoperator}
  L_{n,c,r}(f,x)=\sum_{k=0}^{\infty}p_{n+r,k}(x)\int_{0}^{\infty}\theta_{n,k+r}(t)f(t)\,dt.
\end{equation}
Clearly $L_{n,c,r}\left(f^{(r)},x\right)=\frac{d^r}{dw^r}L_{n,c}(f,w)\Big|_{w=x},$ whenever (\ref{id:derivativetransfer}) is meaningful.
\end{note}
Now onwards we denote the sequence $(n/c)_s c^n n^{-s}$ by $\lambda_n(c,s).$

%
%
%
\section{Main Results}
\begin{thm}\label{Convergence}
 Let    $f\in C^s[0,\infty)$ and $x\in[0,\infty)$ then
\begin{equation*} \lim_{n\rightarrow \infty}\frac{1}{\lambda_n(c,s)}\frac{d^s}{dw^s}L_{n,c}(f;w)\Big|_{w=x}=f^{(s)}(x).
\end{equation*}
\end{thm}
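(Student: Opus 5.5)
The plan is to reduce the statement to an ordinary Korovkin-type convergence result for the transformed operator $L_{n,c,s}$ of Note~1, applied to $f^{(s)}$. \textbf{Step 1} makes the normalising constant explicit. With $p_{n,k}(x)=\frac{(n/c)_k(cx)^k}{k!(1+cx)^{n/c+k}}$, a direct differentiation gives
\[
\frac{d}{dx}p_{n,k}(x)=n\bigl(\tilde p_{n,k-1}(x)-\tilde p_{n,k}(x)\bigr),
\]
where $\tilde p$ is the same kernel with $n/c$ replaced by $n/c+1$. Summing by parts in $k$ and then integrating by parts in $t$, using $\frac{d}{dt}\theta_{n,k}=n(\theta_{n,k-1}-\theta_{n,k})$, moves one derivative onto $f$. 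Iterating $s$ times, as in the proof of Lemma~\ref{derivativetransfer}, I expect to obtain
\[
\frac{d^s}{dw^s}L_{n,c}(f;w)\Big|_{w=x}=\lambda_n(c,s)\,L^{*}_{n,c,s}\bigl(f^{(s)},x\bigr),
\]
with
\[
L^{*}_{n,c,s}(g,x)=\sum_{k\ge 0}\tilde p^{(s)}_{n,k}(x)\int_0^\infty\theta_{n,k+s}(t)g(t)\,dt .
\]
Here $\tilde p^{(s)}_{n,k}$ is the kernel with parameter $n/c+s$. The factor $(n/c)_s\,c^{s}n^{-s}$, i.e.\ $\lambda_n(c,s)$ with the exponent of $c$ read as $s$, arises from the Pochhammer ratios produced at each differentiation. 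The identity must be justified by a growth hypothesis on $f^{(s)}$, say $|f^{(s)}(t)|\le Me^{\gamma t}$. This ensures that the boundary terms in the integrations by parts vanish and that termwise differentiation of the series is legitimate.

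\textbf{Step 2} computes the first moments of $L^{*}_{n,c,s}$. The generating-function argument of Lemma~\ref{mgf}, applied with $n/c$ replaced by $n/c+s$ and $k$ shifted by $s$, gives $L^{*}_{n,c,s}(e^{\theta t},x)$ in closed form. Differentiating at $\theta=0$ yields $L^{*}_{n,c,s}(1,x)=1$, $L^{*}_{n,c,s}(t-x,x)=O(1/n)$ and $L^{*}_{n,c,s}((t-x)^2,x)=O(1/n)$, uniformly for $x$ in compact sets. This is the analogue of Corollary~\ref{orderofmoments}. Since $\lambda_n(c,s)\ne 0$, the theorem reduces to showing $L^{*}_{n,c,s}(g,x)\to g(x)$ for $g=f^{(s)}$.

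\textbf{Step 3} proves this convergence. Fix $x$ and $\varepsilon>0$, and use continuity of $g$ to choose $\delta$ with $|g(t)-g(x)|<\varepsilon$ whenever $|t-x|<\delta$. Then
\[
\bigl|L^{*}_{n,c,s}(g,x)-g(x)\bigr|\le \varepsilon+\sum_k\tilde p^{(s)}_{n,k}(x)\int_{|t-x|\ge\delta}\theta_{n,k+s}(t)\bigl(Me^{\gamma t}+|g(x)|\bigr)\,dt .
\]
The tail term is $O(n^{-m})$ by the analogue of Lemma~\ref{lemma4,hoa} for the shifted kernels. Alternatively, by Cauchy--Schwarz it is at most $\delta^{-2}\bigl(L^{*}((t-x)^2,x)\bigr)^{1/2}\bigl(L^{*}(e^{2\gamma t},x)\bigr)^{1/2}$. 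The second factor is bounded, since by the closed form of Step~2 it stays finite for $n>2\gamma(1+cx)$, and the first factor tends to $0$.

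I expect Step~1 to be the main obstacle. The paper's Lemma~\ref{derivativetransfer} states the derivative identity without any constant, which is inconsistent with the normalisation by $\lambda_n(c,s)$ in the theorem. So the exact bookkeeping of the parameter shift $n/c\mapsto n/c+1$ and of the emerging factor $n/(n/c\cdot c)$ at each step has to be redone carefully. The case $x=0$ also needs separate care, since there only the $k=0$ terms survive and the boundary terms must be checked directly.
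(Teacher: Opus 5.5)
Your proof is correct, and it takes a different route from the paper's.

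\textbf{The paper's route.} It does not move derivatives onto $f$. It Taylor-expands $f$ to order $s$ about $x$ with a Peano remainder $(t-x)^s\epsilon(t,x)$. Since $L_{n,c}((t-x)^k,w)$ is a polynomial of degree at most $k$ in $w$, only the $k=s$ term survives, and it contributes $s!$ times the leading coefficient $n^{-s}(n/c)_s c^s$. The paper then asserts that $\frac{d^s}{dw^s}L_{n,c}((t-x)^s\epsilon(t,x),w)\big|_{w=x}\to 0$ ``by standard arguments''. In practice this means Lemma~\ref{lemma5,hoa}, Cauchy--Schwarz, and central moments of order $2s$.

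\textbf{Your Step~1 holds.} For the kernel with parameter $b$ one has $\frac{d}{dx}p^{[b]}_k=bc\,\bigl(p^{[b+1]}_{k-1}-p^{[b+1]}_k\bigr)$. Each integration by parts in $t$ contributes a factor $1/n$, so the $i$-th step gives $(n+ic)/n$. The product is exactly $(n/c)_s c^s n^{-s}$. You are therefore right on three points:
\begin{enumerate}
\item Lemma~\ref{derivativetransfer} as printed lacks this constant.
\item It uses the wrong basis: $p_{n+r,k}$ has parameter $(n+r)/c$, not $n/c+r$.
\item $c^n$ in $\lambda_n$ should read $c^s$.
\end{enumerate}

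\textbf{Your Step~2 holds.} It follows from $L^*_{n,c,s}(e^{\theta t},x)=n^{s+1}(n-\theta)^{n/c-1}(n-\theta(1+cx))^{-n/c-s}$. This gives $L^*_{n,c,s}(t-x,x)=(1+s(1+cx))/n$ and a second central moment $x(2+cx)/n+O(n^{-2})$.

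\textbf{What your route buys.}
\begin{enumerate}
\item Simultaneous approximation becomes ordinary pointwise convergence of a positive operator with $L^*_{n,c,s}1=1$, which needs only two moments.
\item It explains where the normalisation $\lambda_n(c,s)$ comes from.
\item It covers $x=0$ with no extra work. There $L^*_{n,c,s}(g,0)=\int_0^\infty\theta_{n,s}(t)g(t)\,dt$, a Gamma average concentrating at $0$, so your concern about that case is unnecessary.
\end{enumerate}
By contrast, the paper's remainder step, if done via Lemma~\ref{lemma5,hoa}, carries the factor $x^{-s}(1+cx)^{-s}$ and only works for $x>0$.

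\textbf{What it costs.} You need exponential growth control of $f^{(s)}$ itself. This is used to kill boundary terms and to bound $L^*_{n,c,s}(e^{2\gamma t},x)$. The Taylor-remainder route needs growth control only of $f$, which is implicit anyway for $L_{n,c}f$ to be defined.

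A minor point in Step~3: $\chi_{\{|t-x|\ge\delta\}}\le|t-x|/\delta$ gives a factor $\delta^{-1}$ rather than your $\delta^{-2}$. This is harmless.
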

\begin{proof}
Writing
\[f(t)=\sum_{k=0}^{s} \frac{f^{(k)}(x)}{k!}(t-x)^k+(t-x)^s\epsilon(t,x)\] and operating $L_{n,c}(f;\cdot)$ we get
\begin{align*}
L_{n,c}(f;w)&=\sum_{k=0}^{s} \frac{f^{(k)}(x)}{k!}L_{n,c}((t-x)^k,w)+L_{n,c}((t-x)^s\epsilon(t,x),w).
\end{align*}
Since $L_{n,c}((t-x)^k,w)$ are polynomials in $w$ of degree at most $k,$
\begin{align*}
\frac{d^s}{dw^s}L_{n,c}(f;w)\Big|_{w=x}&=\frac{f^{(s)}(x)}{s!}\frac{d^s}{dw^s}L_{n,c}((t-x)^s,w)\Big|_{w=x}+\frac{d^s}{dw^s}L_{n,c}((t-x)^s\epsilon(t,x),w)\Big|_{w=x}.
\end{align*}
The coefficient of $w^r$ in $L_{n,c}(t^r,w)$ is equal to
\[n\frac{\partial^r}{\partial \theta^r}(n-\theta(1+cw))^{-n/c}(n-\theta)^{n/c-1}\Bigg|_{\theta=0}=n^{-r}(n/c)_r(1+cw)^r.\]
Thus
\begin{align}\label{coeffws}
\frac{d^s}{d w^s}L_{n,c}((t-x)^s,w)\Big|_{w=x}&=\frac{d^s}{d w^s}\left(\sum_{r=0}^{s}(-x)^{s-r}\binom{s}{r}L_{n,c}(t^r,w)\right)\Big|_{w=x}\nonumber
\\
&=\sum_{r=0}^{s}(-x)^{s-r}\binom{s}{r}n^{-r}(n/c)_r\frac{d^s}{d w^s}(1+cw)^r\Big|_{w=x}\nonumber\\
&=s!\lambda_n(c,s).
\end{align}

As the operator $L_{n,c}$ are bounded and linear, it follows by standard arguments that $\lim_{n\rightarrow \infty}\frac{d^s}{dw^s}L_{n,c}((t-x)^s\epsilon(t,x),w)\Big|_{w=x}=0.$
The first term is therefore
\begin{equation*}
\frac{f^{(s)}(x)}{s!}\frac{d^s}{dw^s}L_{n,c}((t-w)^s,w)\Big|_{w=x}=f^{(s)}(x)\lambda_n(c,s).
\end{equation*}
It is plain that
\[\lim_{n\rightarrow \infty}\frac{1}{\lambda_n(c,s)}\left(\frac{f^{(s)}(x)}{s!}\frac{d^s}{dw^s}L_{n,c}((t-w)^s,w)\Big|_{w=x}\right)=f^{(s)}(x).\]
\end{proof}

\begin{rem} If, $\lim_{n\rightarrow \infty} c^sn^{-s}(n/c)_s=1,$ then we get the limit
\begin{equation*} \lim_{n\rightarrow \infty}\frac{d^s}{dw^s}L_{n,c}(f;w)\Big|_{w=x}=f^{(s)}(x).
\end{equation*}
\end{rem}

\begin{thm}\label{smoothfunction}
 Let    $f\in C^{s+2}[0,\infty)$ and $x\in[0,\infty)$ then
\begin{align*}&\lim_{n\rightarrow \infty}n\Bigg(\frac{1}{\lambda_n(c,s)}\frac{d^s}{dw^s}L_{n,c}(f,w)\Big|_{w=x}-f^{(s)}(x)\Bigg)\\
&= (c    x +  s +  1)f^{(s+1)}(x)+\frac{x(2+cx)}{2}f^{(s+2)}(x).
\end{align*}
\end{thm}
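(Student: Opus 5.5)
Following the proof of Theorem \ref{Convergence}, we expand $f$ by Taylor's formula about the fixed point $x$ to order $s+2$:
\[
f(t)=\sum_{k=0}^{s+2}\frac{f^{(k)}(x)}{k!}(t-x)^k+(t-x)^{s+2}\epsilon(t,x),
\]
where $\epsilon(t,x)\to0$ as $t\to x$ and $\epsilon$ is bounded by an exponential on $[0,\infty)$. We apply $L_{n,c}(\cdot\,;w)$, differentiate $s$ times in $w$, and set $w=x$. By Corollary \ref{expressioncentralmoments}, $L_{n,c}((t-x)^k,w)$ is a polynomial in $w$ of degree at most $k$, so the terms with $k<s$ vanish. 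We are left with three explicit terms, $k=s,s+1,s+2$, plus a remainder. The term $k=s$ was computed in (\ref{coeffws}) and equals $f^{(s)}(x)\lambda_n(c,s)$. So it cancels exactly against $f^{(s)}(x)$ after we divide by $\lambda_n(c,s)$, and no $O(1)$ term survives to be multiplied by $n$.

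For $k=s+1$ and $k=s+2$ we need the two top coefficients of $L_{n,c}(t^r,w)$ in powers of $(1+cw)$. These come from formula (\ref{directmoment}) of Lemma \ref{mgf}: the $j=r$ term is $n^{-r}(n/c)_r(1+cw)^r$, and the $j=r-1$ term is $-r\,n^{-r}(n/c)_{r-1}(n/c-1)(1+cw)^{r-1}$. Writing $(t-x)^{k}=\sum_r\binom{k}{r}(-x)^{k-r}t^r$ and differentiating $s$ times, only $r\ge s$ contributes. For $k=s+1$ we get
\[
\binom{s+1}{s}(-x)\,n^{-s}(n/c)_s c^s s!\;+\;n^{-s-1}(n/c)_{s+1}c^s(s+1)!\,(1+cx)\;-\;(s+1)\,n^{-s-1}(n/c)_s(n/c-1)c^s s!.
\]
Since $(n/c)_{s+1}=(n/c)_s(n/c+s)$, dividing by $\lambda_n(c,s)$ and multiplying by $n$ gives, as $n\to\infty$, $(s+1)!\,[\,x+(s+1)/n\,]\cdot n/(\cdots)$, whose limit should be $(s+1)!\,(cx+s+1)$ after bookkeeping. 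Dividing by $(s+1)!$ yields the coefficient of $f^{(s+1)}(x)$.

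The same procedure for $k=s+2$ needs three top coefficients ($r=s,s+1,s+2$, each with its $j=r$ and $j=r-1$ parts). The $O(n)$ pieces cancel, the leading order is $n^{-1}$, and the limit should be $\frac{(s+2)!}{2}\,x(2+cx)$, which gives the second term. This is mechanical but error-prone. A quicker check is to note that these derivative quantities are exactly what Lemma \ref{derivativetransfer} computes via the operator $L_{n,c,r}$ of (\ref{associatedoperator}). One can therefore alternatively compute the first two central moments of $L_{n,c,s}$ through its moment generating function, analogous to Lemma \ref{mgf}, and read off the constants $(cx+s+1)/n$ and $x(2+cx)/n$.

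The main obstacle is the remainder $n\lambda_n(c,s)^{-1}\frac{d^s}{dw^s}L_{n,c}((t-x)^{s+2}\epsilon(t,x),w)|_{w=x}$. We plan to use Lemma \ref{lemma5,hoa} to write the $s$-th derivative of $p_{n,k}$ as sums of $n^i(k-nx)^j p_{n,k}(x)$ with $2i+j\le s$, divided by $x^s(1+cx)^s$. Given $\varepsilon>0$, we split the $t$-integral into $|t-x|<\delta$, where $|\epsilon|<\varepsilon$, and $|t-x|\ge\delta$, which is $O(n^{-m})$ for every $m$ by Lemma \ref{lemma4,hoa}. On the near part, Cauchy--Schwarz together with the moment orders $\sum p_{n,k}(k-nx)^{2j}=O(n^j)$ and $\int\theta_{n,k}(t-x)^{2s+4}=O(n^{-(s+2)})$ (from Corollary \ref{orderofmoments}) gives the bound $\varepsilon\,C\,n^{i+j/2-(s+2)/2}\le \varepsilon C n^{-1}$. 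Multiplying by $n$ and letting $\varepsilon\to0$ shows the remainder tends to $0$. The case $x=0$, where the factor $x^{-s}$ is singular, should instead be handled directly through the representation in Lemma \ref{derivativetransfer}, which applies the Taylor argument to $L_{n,c,s}(f^{(s)},x)$ without any singular factor. In fact, applying this representation uniformly for all $x$ is probably the cleanest route.
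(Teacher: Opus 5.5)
Your strategy is the paper's own: Taylor expansion, keep only $k=s,s+1,s+2$, and read the top coefficients off Lemma~\ref{mgf}. It breaks exactly at the step you left as ``bookkeeping''. No bookkeeping can repair it, because the identity you are aiming at is false for every $s\neq 1$.

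Your displayed $k=s+1$ expression is correct. Using $(n/c)_{s+1}=(n/c)_s(n/c+s)$ and $(n/c+s)(1+cx)-(n/c-1)-nx=1+s+scx$, it equals exactly
\[
(s+1)!\,\lambda_n(c,s)\,\frac{1+s(1+cx)}{n}.
\]
So the coefficient of $f^{(s+1)}(x)$ in the limit is $1+s+scx$. This agrees with $cx+s+1$ only when $s=1$ or $x=0$. Two checks:
\begin{itemize}
\item For $s=0$ and $f(t)=t$, Lemma~\ref{moments} gives $L_{n,c}(t;x)=x+1/n$. The left side is therefore $1$, not $1+cx$.
\item For $s=2$ and $f(t)=t^3$, the formula for $M_{n,3}$ gives $\lambda_n(c,2)^{-1}M_{n,3}''(x)=6x+(18+12cx)/n$. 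The limit is $18+12cx$, not $6(cx+3)$.
\end{itemize}
The paper's own intermediate formula (\ref{k=s+1term}) also gives $1+s(1+cx)$; the final line of its proof does not match it.

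Your $k=s+2$ value is right. Your remainder estimate also works, provided you use the weighted moment $\mu_{n,2s+4}(x)=O(n^{-s-2})$ after Cauchy--Schwarz in $k$, not a bound for each single $k$. With those pieces, what your argument actually proves is
\[
\lim_{n\to\infty}n\Bigl(\frac{1}{\lambda_n(c,s)}\frac{d^s}{dw^s}L_{n,c}(f,w)\Big|_{w=x}-f^{(s)}(x)\Bigr)=\bigl(1+s(1+cx)\bigr)f^{(s+1)}(x)+\frac{x(2+cx)}{2}f^{(s+2)}(x).
\]

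For your alternative route, do not use Lemma~\ref{derivativetransfer} as printed. A direct computation gives $p_{n,k}'(x)=n\bigl(p_{n+c,k-1}(x)-p_{n+c,k}(x)\bigr)$, with index $n+c$, not $n+1$. Iterating this with integration by parts yields
\[
\frac{d^s}{dw^s}L_{n,c}(f,w)\Big|_{w=x}=\lambda_n(c,s)\sum_{k=0}^{\infty}p_{n+sc,k}(x)\int_0^\infty\theta_{n,k+s}(t)f^{(s)}(t)\,dt .
\]
This is where the normalization by $\lambda_n(c,s)$ comes from. The printed version (index $n+s$, no factor $\lambda_n(c,s)$) is wrong at order $1/n$, which is precisely the order that matters here.

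With the corrected identity, the normalized derivative is a positive, constant-preserving operator applied to $f^{(s)}$. Its moment generating function is $n^{s+1}(n-\theta)^{n/c-1}\bigl(n-\theta(1+cx)\bigr)^{-n/c-s}$. Its first central moment is $(1+s(1+cx))/n$, and $n$ times its second central moment tends to $x(2+cx)$. This route is indeed the cleanest: it covers $x=0$ and the remainder by positivity, with no singular factor $x^{-s}$. But it too confirms the coefficient $1+s(1+cx)$, not $cx+s+1$.
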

\begin{proof}

We write
\[f(t)=\sum_{k=0}^{s+2} \frac{f^{(k)}(x)}{k!}(t-x)^k+(t-x)^s\epsilon(t,x)\] and operate by $L_{n,c}(f;\cdot)$ both side to get
\begin{align*}
L_{n,c}(f;w)&=\sum_{k=0}^{s+2} \frac{f^{(k)}(x)}{k!}L_{n,c}((t-x)^k,w)+L_{n,c}((t-x)^s\epsilon(t,x),w)\\
\end{align*}
Since, $L_{n,c}((t-x)^k,w)$ are polynomials of degree at most $k,$ we have
\begin{align*}
\frac{d^s}{dw^s}L_{n,c}(f;w)\Big|_{w=x}&=\sum_{k=s}^{s+2} \frac{f^{(k)}(x)}{k!}\frac{d^s}{dw^s}L_{n,c}((t-x)^k,w)\Big|_{w=x}+\frac{d^s}{dw^s}L_{n,c}((t-x)^s\epsilon(t,x),w)\Big|_{w=x}\\
&=E_1+E_2.
\end{align*}
It is sufficient to calculate coefficients of $w^k$ for $k=s,s+1$ and $k=s+2$ only. By (\ref{coeffws}), we have that coefficient of $w^s$ in
$L_{n,c}((t-x)^s,w)$ is $\lambda_n(c,s).$ So that $L_{n,c}((t-x)^s,w)=\lambda_n(c,s) w^s+O(w^{s-1}).$ Therefore,
\begin{equation}\label{k=sterm}
\frac{d^s}{dw^s}L_{n,c}((t-x)^s,w)\Big|_{w=x}=s! \lambda_n(c,s).
\end{equation}
Next, we calculate $\frac{d^s}{dw^s}L_{n,c}((t-x)^{s+1},w)\Big|_{w=x}$ as follows. We have
\begin{align*}
L_{n,c}((t-x)^{s+1},w)&=L_{n,c}(t^{s+1},w)-(s+1)x L_{n,c}(t^s,w)+O(w^{s-1}),\\
&=-(s+1)x\left(\lambda_n(c,s)  w^s\right)+M_{n,s+1}(w)+O(w^{s-1}).\end{align*}
Using (\ref{directmoment}),
\begin{align*}
\frac{d^s}{dw^s} M_{n,s+1}(w)&=\frac{d^s}{dw^s}\Bigg[n\Bigg\{(s+1)\frac{\partial^s}{\partial \theta^s}(n-\theta(1+cw))^{-n/c} \frac{\partial}{\partial \theta}(n-\theta)^{n/c-1}\Bigg|_{\theta=0}\\
&+(n-\theta)^{n/c-1}\frac{\partial^{s+1}}{\partial \theta^{s+1}}(n-\theta(1+cw))^{-n/c} \Bigg|_{\theta=0}\Bigg\}+O(w^{s-1})\Bigg]\\
&=\frac{\lambda_n(c,s)}{n}(s+1)!\left((n+cs)(w+1/c)-n/c+1\right) .
\end{align*}
Therefore, we have
\begin{align}\label{k=s+1term}
\frac{f^{(s+1)}(x)}{(s+1)!}\frac{d^s}{dw^s}L_{n,c}((t-x)^{s+1},w)\Big|_{w=x}&=\lambda_n(c,s)(1+s(1+cx))\frac{f^{(s+1)}(x)}{n}.
\end{align}
Finally, we calculate $\frac{d^s}{dw^s}L_{n,c}((t-x)^{s+2},w)\Big|_{w=x}.$ In the identity
\[\frac{d^s}{dw^s}L_{n,c}((t-x)^{s+2},w)=\sum_{j=0}^{s+2}\binom{s+2}{j}(-x)^{s+2-j}\frac{d^s}{dw^s}L_{n,c}(t^j,w)\] we need to compute three terms $E_j's$ corresponding to
$j=s,s+1,s+2$ respectively.
Now, following the steps of (\ref{k=sterm}), we get
\[L_{n,c}(t^s,w)=\binom{s+2}{2}x^2\lambda_n(c,s) w^s+O(w^{s-1})\] so that
\begin{equation}\label{Es}E_s=\binom{s+2}{2}x^2\lambda_n(c,s)s!,\end{equation}
\begin{align}\label{Es+1}E_{s+1}&=-\frac{f^{(s+2)}(x)}{(s+2)!}(s+2)x\frac{d^s}{dw^s}L_{n,c}((t-x)^{s+1},w)\Big|_{w=x}\nonumber\\
&=-\frac{f^{(s+2)}(x)}{(s+2)!}(s+2)x\lambda_n(c,s)(s+1)!\left(\frac{(n+cs)(w+1/c)-n/c+1}{n}\right).
\end{align}
And for $E_{s+2}$ we observe
\begin{align}\label{Es+2}
E_{s+2}&=\frac{f^{(s+2)}(x)}{(s+2)!}\frac{d^s}{dw^s}L_{n,c}((t-x)^{s+2},w)\Big|_{w=x}\nonumber\\
&=\frac{f^{(s+2)}(x)}{(s+2)!}n\frac{d^s}{dw^s}\Bigg[\frac{\partial^{s+2}}{\partial \theta^{s+2}}(n-\theta(1+cw))^{-n/c}(n-\theta)^{n/c-1}\nonumber\\
&+(s+2)\frac{\partial^{s+1}}{\partial \theta^{s+1}}(n-\theta(1+cw))^{-n/c}\frac{\partial}{\partial \theta}(n-\theta)^{n/c-1}\nonumber\\
&+\frac{(s+1)(s+1)}{2}\frac{\partial^{s}}{\partial \theta^{s}}(n-\theta(1+cw))^{-n/c}\frac{\partial^2}{\partial \theta^2}(n-\theta)^{n/c-1}\Bigg]_{w=x}\nonumber\\
&=\frac{f^{(s+2)}(x)}{(s+2)!}\frac{\lambda_n(c,s)}{n^{s+2}}\frac{d^s}{dw^s}\Bigg[(n/c+s+1)(n/c+s)(1+cw)^{s+2}\nonumber\\
&-(s+2)(n/c+s)(1+cw)^{s+1}(n/c-1)\nonumber\\
&+\frac{(s+2)(s+1)}{2}(n/c-1)(n/c-2)(1+cw)^s\Bigg]_{w=x}\nonumber\\
&=\frac{f^{(s+2)}(x)\lambda_n(c,s)}{n^2}\Bigg[(n/c+s+1)(n/c+s)\frac{(1+cx)^2}{2}\nonumber\\
&-(n/c+s)(n/c-1)(1+cx)+\frac{(n/c-1)(n/c-2)}{2}\Bigg].\end{align}
Combining (\ref{Es})-(\ref{Es+2}), it follows that
\begin{align*}
&\frac{d^s}{dw^s}L_{n,c}((t-x)^{s+2},w)\Big|_{w=x}\\
&=\lambda_n(c,s)f^{(s)}(x)+\frac{\lambda_n(c,s)}{n}f^{(s+1)}(x)\left(1+s(1+cx)\right)\\
&+\frac{f^{(s+2)}(x)}{2n^2}\left(2+s(1+cx)(2+cx)+s^2(1+cx)^2+nx(2+cx)\right).
\end{align*}
Therefore, we have
\begin{align*}
&\lim_{n\rightarrow \infty}n\Bigg(\frac{1}{\lambda_n(c,s)}\frac{d^s}{dw^s}L_{n,c}(f,w)\Big|_{w=x}-f^{(s)}(x)\Bigg)=(c    x +  s +  1)f^{(s+1)}(x)+x\left(1+\frac{cx}{2}\right)f^{(s+2)}(x).
\end{align*}

This completes the proof.
\end{proof}
\begin{rem}\label{remvorno}For $s=0,$ we obtain the following result
\begin{align*}
\lim_{n\rightarrow \infty}n\Bigg(\frac{1}{\lambda_n(c,0)}L_{n,c}(f,x)-f(x)\Bigg)= (c    x  +  1)f'(x)+x\left(1+\frac{cx}{2}\right)f''(x).
\end{align*}
Further, if $c\uparrow 1,$ then the operator $L_{n,c}(f,x)$ reduces to the Baskakov-Sz\'{a}sz type operator. And we get
\begin{align*}
\lim_{n\rightarrow \infty}n\left(L_{n,c}(f,x)-f'(x)\right)= (x  +  1)f^{(s+1)}(x)+\frac{x(x+2)}{2}f''(x).
\end{align*}

\end{rem}

Next theorem provide an upper bound in simultaneous approximation in global setting.

\begin{thm}\label{th1Simultaneous}Let $[a_1,b_1]\subset(a,b)\subset (0,\infty),$ and $f^{(r)}$ exists and is continuous on $[a,b]$ then for sufficiently large $n,$
\begin{equation*}\big\|L_{n,c}^{(r)}\left(f,\cdot\right)-f^{(r)}\big\|_{[a_1,b_1]}\leqslant
C \left(n^{-1}\|f\|+\omega_2\big(f^{(r)};n^{-1/2};[a_1,b_1]\big)\right),\end{equation*}
where $C$ is independent of $f$ and $n.$
\end{thm}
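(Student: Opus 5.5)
The plan is the standard Steklov-mean argument. Let $f_{h}:=f_{h,r+2}$ be the Steklov mean of order $r+2$ of $f$, built from the restriction of $f$ to $[a,b]$. Fix an intermediate interval $[a_2,b_2]$ with $[a_1,b_1]\subset(a_2,b_2)\subset[a_2,b_2]\subset(a,b)$, and extend $f_h$ by zero (smoothly, with a cutoff) outside $[a_2,b_2]$. Then split
\begin{align*}
\big\|L_{n,c}^{(r)}(f,\cdot)-f^{(r)}\big\|_{[a_1,b_1]}&\leqslant \big\|L_{n,c}^{(r)}(f-f_h,\cdot)\big\|_{[a_1,b_1]}+\big\|L_{n,c}^{(r)}(f_h,\cdot)-f_h^{(r)}\big\|_{[a_1,b_1]}\\
&\quad+\big\|f_h^{(r)}-f^{(r)}\big\|_{[a_1,b_1]}=:J_1+J_2+J_3 .
\end{align*}
At the end we choose $h=n^{-1/2}$. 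In the statement $f^{(r)}$ is treated as the main object, so I will apply Lemma \ref{lemma0.4} to $f^{(r)}$ (Steklov means commute with differentiation). This gives the following bounds:
\begin{itemize}
\item $J_3\leqslant C\omega_2(f^{(r)},h)$ by part (c) of Lemma \ref{lemma0.4} with $s=2$;
\item $\|f_h^{(r+1)}\|\leqslant Ch^{-1}\omega_2(f^{(r)},h)$ and $\|f_h^{(r+2)}\|\leqslant Ch^{-2}\omega_2(f^{(r)},h)$ by part (b).
\end{itemize}

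\textbf{The term $J_2$.} This uses a Taylor expansion of $f_h$ about $x\in[a_1,b_1]$ to order $r+2$, followed by $r$-fold differentiation. The polynomial part is handled exactly as in the proofs of Theorems \ref{Convergence} and \ref{smoothfunction}: the terms $(t-x)^k$ with $k<r$ die after $r$ derivatives, and the terms $k=r,r+1$ produce $\lambda_n(c,r)f_h^{(r)}(x)$ plus $O(n^{-1})\|f_h^{(r+1)}\|$. For the remainder $(t-x)^{r+2}$ I use Lemma \ref{lemma5,hoa} to write
\[
L^{(r)}_{n,c}(g,x)=\sum_k \sum_{2i+j\le r} n^i (k-nx)^j \frac{q_{i,j,r}(x)}{x^r(1+cx)^r}\,p_{n,k}(x)\int\theta_{n,k}g .
\]
Cauchy–Schwarz, together with the moment estimates of Corollary \ref{orderofmoments} and the analogous bound $\sum p_{n,k}(k-nx)^{2j}=O(n^{j})$, shows that the remainder contributes $O(n^{-1})\|f_h^{(r+2)}\|$. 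Here $[a_1,b_1]\subset(0,\infty)$ keeps $x^{-r}(1+cx)^{-r}$ bounded. The resulting estimate is
\[
J_2\leqslant C\left(|\lambda_n(c,r)-1|\,\|f_h^{(r)}\|+n^{-1}\|f_h^{(r+1)}\|+n^{-1}\|f_h^{(r+2)}\|\right).
\]
With $h=n^{-1/2}$ the last two terms are $\leqslant C\omega_2(f^{(r)},n^{-1/2})$. The first term is bounded by $Cn^{-1}\|f\|$ via part (d) applied through the Steklov construction, provided the normalization $\lambda_n(c,r)\to1$ at rate $O(1/n)$ (Remark following Theorem \ref{Convergence}). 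Otherwise I interpret $L^{(r)}_{n,c}$ as normalized by $\lambda_n$, as in Theorem \ref{smoothfunction}.

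\textbf{The term $J_1$.} Put $g=f-f_h$ and split $g=g\chi_{[a_2,b_2]}+g(1-\chi_{[a_2,b_2]})$.
\begin{itemize}
\item For the inner part, the representation of Lemma \ref{lemma5,hoa} with Cauchy–Schwarz gives $|L^{(r)}_{n,c}(g\chi,x)|\leqslant C\sum_{2i+j\le r}n^{i}n^{j/2}\|g\|_{[a_2,b_2]}$. This is too crude, so instead I use Lemma \ref{derivativetransfer}. It gives $L_{n,c}^{(r)}(g,x)=L_{n,c,r}(g^{(r)},x)$, and $L_{n,c,r}$ is a positive operator of norm $O(1)$. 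Hence the inner part is $\leqslant C\|f^{(r)}-f_h^{(r)}\|_{[a_2,b_2]}\leqslant C\omega_2(f^{(r)},h)$.
\item For the outer part, $|t-x|\ge\delta$, and Lemma \ref{lemma4,hoa} (with $g$ of exponential growth $e^{\gamma t}$) together with the derivative formula of Lemma \ref{lemma5,hoa} give $O(n^{-s})\|f\|$ for any $s$, in particular $\leqslant Cn^{-1}\|f\|$.
\end{itemize}

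The main obstacle is $J_1$: keeping the localization consistent. Lemma \ref{derivativetransfer} needs global smoothness, whereas $f_h$ is only defined on $[a_1,b_1]$. I would handle this with a smooth cutoff equal to $1$ on $[a_2,b_2]$ and push every piece with support away from $[a_1,b_1]$ into the exponentially small term of Lemma \ref{lemma4,hoa}. Collecting $J_1$, $J_2$, $J_3$ with $h=n^{-1/2}$ then yields the theorem.
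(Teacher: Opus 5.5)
There is a genuine gap in $J_2$. Your splitting $J_1+J_2+J_3$ with $h=n^{-1/2}$ matches the paper's $A_1+A_2+A_3$. Your Taylor/moment derivation of $J_2\leqslant C\bigl(|\lambda_n(c,r)-1|\,\|f_h^{(r)}\|+n^{-1}\|f_h^{(r+1)}\|+n^{-1}\|f_h^{(r+2)}\|\bigr)$ is also a more explicit substitute for the paper's appeal to Theorem~\ref{smoothfunction}, which is only a pointwise limit. The normalization is not the issue: by \eqref{coeffws}, $\lambda_n(c,r)=c^rn^{-r}(n/c)_r=\prod_{i=0}^{r-1}(1+ic/n)=1+O(n^{-1})$. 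The problem is how you then dispose of the intermediate derivatives.

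First, part (d) of Lemma~\ref{lemma0.4} does not give $\|f_h^{(r)}\|\leqslant C\|f\|$. Applied to $f$ it bounds $\|f_h\|$; applied to $f^{(r)}$ it bounds $\|f_h^{(r)}\|$ only by $\|f^{(r)}\|_{[a,b]}$. The crude estimate $\|f_h^{(r)}\|\leqslant Ch^{-r}\|f\|$ is too weak once $h=n^{-1/2}$ and $r\geqslant1$.

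Second, $\|f_h^{(r+1)}\|\leqslant Ch^{-1}\omega_2(f^{(r)},h)$ is false. Part (b) with $\omega_s$ on the right holds only when the derivative order equals $s$; in general one has $\|F_{h,s}^{(j)}\|\leqslant Ch^{-j}\omega_j(F,h)$. Take $f^{(r)}$ affine on $[a,b]$. Then $\omega_2(f^{(r)},h)=0$, but the Steklov mean reproduces affine functions, so $f_h^{(r+1)}=f^{(r+1)}\neq0$. The term $\lambda_n(c,r)\bigl(1+r(1+cx)\bigr)n^{-1}f_h^{(r+1)}(x)$ of $J_2$ (see the proof of Theorem~\ref{smoothfunction}) is then of exact order $n^{-1}$ and cannot be absorbed into $\omega_2$.

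The missing ingredient is the interpolation inequality for intermediate derivatives on a compact interval (Landau--Kolmogorov/Goldberg--Meir; the paper invokes it under the name ``Riesz--Thorin''):
$\sum_{m=r}^{r+2}\|f_h^{(m)}\|\leqslant C\bigl(\|f_h\|+\|f_h^{(r+2)}\|\bigr)$.
Combine it with (d) for $\|f_h\|\leqslant C\|f\|$, and with the correct form of (b) at $j=2$ for $\|f_h^{(r+2)}\|=\|((f^{(r)})_h)''\|\leqslant Ch^{-2}\omega_2(f^{(r)},h)$. This gives $J_2\leqslant Cn^{-1}\bigl(\|f\|+h^{-2}\omega_2(f^{(r)},h)\bigr)$, which is exactly the right-hand side at $h=n^{-1/2}$. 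This is where the term $n^{-1}\|f\|$ in the statement comes from. The same inequality also controls the lower-order derivatives of $f_h$ that your cutoff brings into the Taylor remainder.

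Your $J_1$ argument, by contrast, is a legitimate alternative route to the paper's.
\begin{itemize}
\item The paper Taylor-expands $\Psi=f-f_{h,2}$ to order $r$ and gets $\lambda_n(c,r)\Psi^{(r)}(x)$ exactly from the polynomial part. It bounds the localized remainder by Lemma~\ref{lemma5,hoa} and Cauchy--Schwarz; the factor $(t-x)^r$ cancels the $n^{r/2}$ growth you observed. Lemma~\ref{lemma4,hoa} handles $t$ outside $(a,b)$. No smoothness in $t$ is needed, since the $w$-derivatives fall only on $p_{n,k}$.
\item You move the derivatives onto $f-f_h$ via Lemma~\ref{derivativetransfer} and use positivity of $L_{n,c,r}$. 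This avoids the moment bookkeeping, but it needs three things: the smooth cutoff $\phi$; an analogue of Lemma~\ref{lemma4,hoa} for $L_{n,c,r}$; and control of the Leibniz terms $g^{(j)}\phi^{(r-j)}$, $j<r$. For the last, one can use $\|f^{(j)}-f_h^{(j)}\|\leqslant C\omega_{r+2}(f^{(j)},h)\leqslant Ch^{r-j}\omega_2(f^{(r)},h)$ for your order-$(r+2)$ Steklov mean.
\end{itemize}
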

 \begin{proof}We have that
 \begin{align*}&\big\|L_{n,c}^{(r)}\left(f,\right)-f^{(r)}\big\|_{[a_1,b_1]}\\
 &\leqslant
\big\|L_{n,c}^{(r)}\left(f-f_{h,2},\right)\big\|_{[a_1,b_1]}+
\big\|L_{n,c}^{(r)}\left(f_{h,2},\right)-f_{h,2}^{(r)}\big\|_{[a_1,b_1]}
\\&+\big\|f^{(r)}-f^{(r)}_{h,2}\big\|_{[a_1,b_1]}=A_1+A_2+A_3,\  \textrm{say.}
\end{align*}
Since, $\left(f_{h,2}\right)^{(r)}=\left(f^{(r)}\right)_{h,2},$ by property of  the Steklov mean it follows that
\[A_3\leqslant C\,\omega_{2}\left(f^{(r)},h,[a_1,b_1]\right).\]
Next, applying Theorem \ref{smoothfunction} and the Riesz-Thorin interpolation theorem,  we have that
\begin{align*}A_2&\leqslant C\,n^{-1}\sum_{m=r}^{r+2}\Big\|f^{(m)}_{h,2}\Big\|_{[a_1,b_1]}\\
&\leqslant C\,n^{-1}\left(\Big\|f_{h,2}\Big\|_{[a_1,b_1]}+
\Big\|f^{(r+2)}_{h,2}\Big\|_{[a_1,b_1]}\right)\\
&\leqslant C\,n^{-1}\left(\Big\|f_{h,2}\Big\|_{[a_1,b_1]}+
\Big\|\left(f^{(r)}\right)''_{h,2}\Big\|_{[a_1,b_1]}\right).\end{align*}
Hence, by properties of Steklov mean we have
\[A_2\leqslant C\,n^{-1}\left(\|f\|+h^{-2}\omega_{2}\big(f^{(r)},h,[a_1,b_1]\big)\right).\]
In order to find estimation for $A_3,$ let us write $f-f_{h,2}=\Psi.$ By our hypothesis, we can write
\begin{eqnarray*}\Psi(t)&=&\sum_{m=0}^{r}\frac{\Psi^{(m)}(x)}{m!}(t-x)^m+
\frac{\Psi^{(r)}(\xi)-\Psi^{(r)}(x)}{r!}(t-x)^r\chi(t)
\\&+& h(t,x)\left(1-\chi(t)\right),\end{eqnarray*}
where $\xi$ lies between $t$ and $x$, and $\chi$ is the
characteristic function of the interval $(a,b).$ For
$t\in(a,b)$ and $x\in [a_1,b_1],$ we have
\begin{eqnarray*}\Psi(t)&=&\sum_{m=0}^{r}\frac{\Psi^{(m)}(x)}{m!}
(t-x)^m+
\frac{\Psi^{(r)}(\xi)-\Psi^{(r)}(x)}{r!}(t-x)^r,
 \end{eqnarray*}
If $t\in[0,\infty)\setminus (a,b), x\in [a_1,b_1]$ then,
we define
\begin{eqnarray*}h(t,x)=\Psi(t)
-\sum_{m=0}^{r}\frac{\Psi^{(m)}(x)}{m!}(t-x)^m.\end{eqnarray*}
Now,
\begin{eqnarray*}\frac{d^r}{dw^r}L_{n,c}\big(\Psi(t),w\big)\Big|_{w=x}&=&\sum_{m=0}^{r}\frac{\Psi^{(m)}(x)}{m!}
\frac{d^r}{dw^r}L_{n,c}\big((t-x)^m,w\big)\Big|_{w=x}\\
&+&
\frac{d^r}{dw^r}\frac{L_{n,c}\big(\big(\Psi^{(r)}(\xi)-\Psi^{(r)}(x)\big)(t-x)^r\chi(t),w\big)\Big|_{w=x}}{r!}
\\&+& \frac{d^r}{dw^r}L_{n,c}\big(h(t,x)\left(1-\chi(t)\right),w\big)\Big|_{w=x}:=B_1+B_2+B_3,\ \ \textrm{say.}\end{eqnarray*}
By using the identity (\ref{k=sterm}), we write that
\begin{align*}B_1&=\sum_{m=0}^{r}\frac{\Psi^{(m)}(x)}{m!}
\frac{d^r}{dw^r}L_{n,c}\big((t-x)^m,x\big)\Big|_{w=x}\\
&=\sum_{m=0}^{r}\frac{\Psi^{(m)}(x)}{m!}\sum_{l=0}^{m}{m\choose l}(-x)^{m-l}
\frac{d^r}{dw^r}L_{n,c}\big(t^l,x\big)\Big|_{w=x}\\
&= \lambda_n(c,r)\Psi^{(r)}(x).
\end{align*}
Hence,
\[|B_1|\leqslant C\,\lambda_n(c,r)\|f^{(r)}-f^{(r)}_{h,2}\|_{[a_1,b_1]}.\]
Next, using Lemma \ref{lemma5,hoa} and Schwarz's inequality for integration and then for summation we get
\begin{align*}|B_2|&\leqslant\frac{2}{r!}\|f^{(r)}-f^{(r)}_{h,2}\|_{[a_1,b_1]}
\frac{d^r}{dw^r}L_{n,c}\big(\chi(t)|t-x|^r,w\big)\Big|_{w=x}\\
&\leqslant\frac{2}{r!}\|f^{(r)}-f^{(r)}_{h,2}\|_{[a_1,b_1]}\sum_{2i+j\leqslant r\atop i,j\geqslant0}n^i
\frac{|q_{i,j,r}(x)|}{x^r(1+x)^r}n\sum_{k=0}^{\infty}p_{n,k}(x)|k-nx|^j\times\\
&\times
\int\limits_{0}^{\infty}\theta_{n,k}(t)\chi(t)|t-x|^r\,dt\\
&\leqslant\frac{2}{r!}\|f^{(r)}-f^{(r)}_{h,2}\|_{[a_1,b_1]}\sum_{2i+j\leqslant r\atop i,j\geqslant0}n^{i+1}
\frac{|q_{i,j,r}(x)|}{x^r(1+x)^r}\sum_{k=0}^{\infty}p_{n,k}(x)|k-nx|^j\times\\
&\times
\Bigg(\int\limits_{0}^{\infty}\theta_{n,k}(t)\,dt\Bigg)^{\!\!1/2}
\Bigg(\int\limits_{0}^{\infty}\theta_{n,k}(t)(t-x)^{2r}\,dt\Bigg)^{\!\!1/2}\end{align*}

\begin{align*}&\leqslant C\,\|f^{(r)}-f^{(r)}_{h,2}\|_{[a_1,b_1]}\sum_{2i+j\leqslant r\atop i,j\geqslant0}n^i
\Bigg(\sum_{k=0}^{\infty}p_{n,k}(x)(k-nx)^{2j}\Bigg)^{\!\!1/2}\times\\
& \times\Bigg(n\sum_{k=0}^{\infty}p_{n,k}(x)\int\limits_{0}^{\infty}
\theta_{n,k}(t)(t-x)^{2r}\Bigg)^{\!\!1/2}\\
&\leqslant C\,\|f^{(r)}-f^{(r)}_{h,2}\|_{[a_1,b_1]}\sum_{2i+j\leqslant r\atop i,j\geqslant0}n^i
O\big(n^{j/2}\big)O\big(n^{-r/2}\big)\\
&\leqslant C\,\|f^{(r)}-f^{(r)}_{h,2}\|_{[a_1,b_1]}.
\end{align*}
Since $t\in(0,\infty)\setminus(a,b),$ we can choose a $\delta>0$ in
such a way that $|t-x|\geqslant\delta$ for all $x\in [a_1,b_1].$
Thus, by Lemma \ref{lemma5,hoa}, we obtain
\begin{align*}|B_3|&\leqslant \sum_{2i+j\leqslant r\atop i,j\geqslant0}n^i\frac{|q_{i,j,r}(x)|}{x^r(1+x)^r}n\sum_{k=0}^{\infty}p_{n,k}(x)|k-nx|^j
\int\limits_{|t-x|\geqslant\delta}\theta_{n,k}(t)|h(t,x)|\,dt
\end{align*}
For $|t-x|\geqslant\delta,$ we can find a constant $C>0$ such that $|h(t,x)|\leqslant C|t-x|^s.$ Finally using Schwarz's inequality for integration and then for integration,  and Lemma \ref{lemma4,hoa}, it easily follows that    $B_3=O\big(n^{-s}\big)$ for any $s>0.$

Combining the estimates $B_1-B_3,$ we obtain in view of (c) of Steklov mean,
\begin{eqnarray*}A_3&\leqslant&C\,\|f^{(r)}-f^{(r)}_{h,2k+2}\|_{[a_1,b_1]}+O\big(n^{-s}\big)\\
&\leqslant&C\,\omega_{2k+2}\big(f^{(r)},h,[a_1,b_1]\big)+O\big(n^{-s}\big).\end{eqnarray*}
Taking $h=n^{-1/2}$ the proof is completed.\end{proof}

Our last result proves local rate of approximation.

\begin{thm}
Let  $f^{(r)}\in C[0,\infty).$  Then
\[\left|\frac{d^r}{d w^r}L_{n,c}(f;w)\Big|_{w=x}-f^{(r)}(x)\right|\leq 2\,\omega\left(f^{(r)},\frac{q_n(x,r)}{n}\right),\]
where $q_n(x,r)=\left(n x (c x+2)+r (x (c x+4)+3)+r^2 (x+1)^2+2\right)^{1/2}.$
\end{thm}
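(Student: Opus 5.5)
The plan is to rewrite the $r$-th derivative via Lemma \ref{derivativetransfer} as $L_{n,c,r}(f^{(r)},x)$, the transformed operator (\ref{associatedoperator}), and then apply the standard modulus-of-continuity argument for a positive linear operator. Since $p_{n+r,k}\ge 0$ and $\theta_{n,k+r}\ge0$, $L_{n,c,r}$ is positive. We will assume it preserves constants: $\sum_k p_{n+r,k}(x)=1$ and $\int_0^\infty\theta_{n,k+r}(t)\,dt=1$, since $\theta_{n,k}$ is a gamma density. (If the normalisation of $p_{n+r,k}$ produces a factor such as $\lambda_n(c,r)$, we will carry it along; the statement as written implicitly assumes it equals one.) Writing $g=f^{(r)}$, we get
\[\Big|L_{n,c,r}(g,x)-g(x)\Big|\le L_{n,c,r}\big(|g(t)-g(x)|,x\big).\]
Next we use the standard inequality $|g(t)-g(x)|\le\big(1+\delta^{-1}|t-x|\big)\omega(g,\delta)$ and the Cauchy--Schwarz inequality $L_{n,c,r}(|t-x|,x)\le \big(L_{n,c,r}((t-x)^2,x)\big)^{1/2}$. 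Together these give
\[\left|\tfrac{d^r}{dw^r}L_{n,c}(f;w)\big|_{w=x}-f^{(r)}(x)\right|\le \Big(1+\delta^{-1}\big(L_{n,c,r}((t-x)^2,x)\big)^{1/2}\Big)\,\omega(f^{(r)},\delta).\]
Choosing $\delta=q_n(x,r)/n$ yields the factor $2$, provided we show
\[L_{n,c,r}\big((t-x)^2,x\big)\le \frac{q_n(x,r)^2}{n^2}=\frac{nx(cx+2)+r(x(cx+4)+3)+r^2(x+1)^2+2}{n^2}.\]

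The main work is computing this second central moment of $L_{n,c,r}$. We will compute its first moments directly from the definitions. The $t$-integral is a gamma moment: $\int_0^\infty\theta_{n,k+r}(t)t^j\,dt=(k+r+1)\cdots(k+r+j)/n^j$. The $k$-sums use the negative-binomial moments of $p_{n+r,k}$, namely mean $(n+r)x$ with the standard variance formula. An alternative is to repeat the moment generating function computation of Lemma \ref{mgf} with $n$ replaced by $n+r$ and the index shifted by $r$, which gives $L_{n,c,r}(e^{\theta t},x)$ in closed form up to the factor $(n/(n-\theta))^{r}$; differentiating twice at $\theta=0$ gives the moments. For $r=0$ this must reduce to $\mu_{n,2}(x)=(cnx^2+2nx+2)/n^2$ from Corollary \ref{orderofmoments}, which agrees with $q_n(x,0)^2/n^2$. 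This gives a consistency check.

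The expected obstacle is the bookkeeping of the $r$-dependent terms. The shift by $r$ produces a first moment $x+(rx+r+1)/n$, roughly, so squaring the bias gives terms $r^2(x+1)^2/n^2$. The variance contributes $r(x(cx+4)+3)/n^2$-type terms. I would organise the computation as variance plus squared bias and then compare coefficients of $1, r, r^2$ with $q_n(x,r)^2$. Where exact equality fails, I would bound lower-order terms from above, for example $x^2\le(x+1)^2$ or $c<1$, since only an inequality is needed.
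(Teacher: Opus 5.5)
\textbf{The gap: for $r\ge 2$ the $r$-th derivative does not preserve constants, and the statement is false.}
Your outline is the paper's own argument: positivity, the inequality $|g(t)-g(x)|\le(1+|t-x|/\delta)\,\omega(g,\delta)$, Cauchy--Schwarz, and the choice $\delta=q_n(x,r)/n$. The step you flag and then assume away is exactly where it fails.

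The extra factor does not come from the weights, since $\sum_k p_{m,k}=1$ for every $m$. It comes from the derivative identity. Because $p_{n,k}$ is a negative binomial weight with parameter $n/c$, one has $\frac{d}{dx}p_{n,k}(x)=n\big(p_{n+c,k-1}(x)-p_{n+c,k}(x)\big)$, so the index moves by $c$. The index $n$ of $\theta_{n,\cdot}$ never changes. Hence at the $(i+1)$-st differentiation the factor $n+ic$ coming from $p$ is met only by $1/n$ from $\theta_{n,\cdot}'$. The correct form of Lemma \ref{derivativetransfer} is
\[
\frac{d^r}{dw^r}L_{n,c}(f,w)\Big|_{w=x}=\lambda_n(c,r)\sum_{k=0}^{\infty}p_{n+rc,k}(x)\int_0^{\infty}\theta_{n,k+r}(t)f^{(r)}(t)\,dt,
\]
where $\lambda_n(c,r)=(n/c)_r c^r n^{-r}=\prod_{i=1}^{r-1}(1+ic/n)$. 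This is the normalising factor of Theorems \ref{Convergence} and \ref{smoothfunction}, with $c^n$ read as $c^r$.

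Carrying this factor along leaves a term $(\lambda_n(c,r)-1)|f^{(r)}(x)|$ that no modulus of continuity can absorb. In fact the statement is false. Take $r=2$ and $f(t)=t^2/2$. Lemma \ref{moments} gives $\frac{d^2}{dw^2}L_{n,c}(f,w)=(n^2+cn)/n^2=1+c/n$. So the left side equals $c/n>0$, while $\omega(f'',\cdot)\equiv 0$ makes the right side zero. The paper's proof closes only because it uses the incorrect version of Lemma \ref{derivativetransfer}, with $p_{n+r,k}$ and no factor.

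\textbf{What your plan does prove.}
It proves the statement as written for $r\in\{0,1\}$, where $\lambda_n(c,r)=1$. For general $r$ it proves the normalised estimate
\[
\Big|\frac{1}{\lambda_n(c,r)}\frac{d^r}{dw^r}L_{n,c}(f,w)\Big|_{w=x}-f^{(r)}(x)\Big|\le 2\,\omega\Big(f^{(r)},\frac{q_n(x,r)}{n}\Big).
\]
Equivalently, the unnormalised error is at most $2\lambda_n(c,r)\,\omega\big(f^{(r)},q_n(x,r)/n\big)+(\lambda_n(c,r)-1)|f^{(r)}(x)|$.

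For the moment step you need the corrected weights $p_{n+rc,k}$. With them the first moment is $x+(rcx+r+1)/n$. The second central moment is
\[
\frac{nx(cx+2)+r(c^2x^2+4cx+3)+r^2(1+cx)^2+2}{n^2}.
\]
This is not equal to $q_n(x,r)^2/n^2$; that expression corresponds to the incorrect weights $p_{n+r,k}$. It is, however, bounded by $q_n(x,r)^2/n^2$ when $0<c\le 1$. So your fallback of bounding lower-order terms using $c<1$ is exactly what this step requires.
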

\begin{proof}
We have
\[\frac{d^r}{d w^r}L_{n,c}(f;w)\Big|_{w=x}=\sum\limits_{k=0}^{\infty}P_{n+r,k}(x)\int\limits_{0}^{\infty}\theta_{n,k+r}(t)f^{(r)}(t)\,dt\]
Since $\sum\limits_{k=0}^{\infty}P_{n+r,k}(x)=1$ and $\int\limits_{0}^{\infty}\theta_{n,k+r}(t)dt=1.$
Therefore
\begin{align*}
\left|\frac{d^r}{d w^r}L_{n,c}(f;w)\Big|_{w=x}-f^{(r)}(x)\right|&=\left|\sum\limits_{k=0}^{\infty}P_{n+r,k}(x)\int\limits_{0}^{\infty}\theta_{n,k+r}(t)\left(f^{(r)}(t)dt-f^{(r)}(x)\right)\right|\\
&= L_{n,c,r}(|f^{(r)}(t)-f^{(r)}(x)|;x).\\
\end{align*}
In view of the inequality
\begin{equation*}
|f^{(r)}(t)-f^{(r)}(x)|\leq  \left(1+\frac{|t-x|}{\delta}\right)\omega\left(f^{(r)};\delta\right),
\end{equation*}
we have
\[\left|\frac{d^r}{d w^r}L_{n,c}(f;w)\Big|_{w=x}-f^{(r)}(x)\right|\leq \omega\left(f^{(r)};\delta\right)\left(L_{n,c,r}(1;x)+\frac{L_{n,c,r}(|t-x|;x)}{\delta}\right).\]
Using Schwarz's inequality, we obtain
\[L_{n,c,r}(|t-x|;x)\leq (L_{n,c,r}((t-x)^2);x)^{\frac{1}{2}}.\]
\end{proof}
By direct calculations, we have
\[L_{n,c,r}(t-x)^2);x)=\frac{n x (c x+2)+r (x (c x+4)+3)+r^2 (x+1)^2+2}{n^2}.\]
Hence,
\begin{align*}\left|\frac{d^r}{d w^r}L_{n,c}(f;w)\Big|_{w=x}-f^{(r)}(x)\right|\leq \omega\left(f^{(r)};\delta\right)\left(1+\frac{(L_{n,c,r}((t-x)^2);x)^{\frac{1}{2}}}{\delta}\right).
\end{align*}
The theorem now follows by
choosing \[\delta=\left(\frac{n x (c x+2)+r (x (c x+4)+3)+r^2 (x+1)^2+2}{n^2}\right)^{1/2}.\]
\begin{rem}
If $f^{(r)}\in \text{Lip}_M \alpha,$ then
\begin{align*}\left|\frac{d^r}{d w^r}L_{n,c}(f;w)\Big|_{w=x}-f^{(r)}(x)\right|&\leq 2M \frac{\left(n x (c x+2)+r (x (c x+4)+3)+r^2 (x+1)^2+2\right)^{\alpha/2}}{n^{\alpha}}\\
&\leq M\frac{C(r,x)}{n^{\alpha/2}}.
\end{align*}
\end{rem}

\bibliographystyle{siam}
\bibliography{Nidhi}

\end{document}